\newcommand{\tr}{\mathop{\mathrm{tr}}}
\newcommand{\la}{\langle}
\newcommand{\ra}{\rangle}
\newcommand{\cX}{\mathcal{X}}
\newcommand{\cU}{\mathcal{U}}
\newcommand{\ud}{{\mathrm{d}}}
\newcommand{\EE}{\mathbb{E}}
\newcommand{\RR}{\mathbb{R}}
\newcommand{\cT}{{\mathcal{T}}}
\newcommand{\cF}{\mathcal{F}}
\def\##1\#{\begin{align}#1\end{align}}
\def\$#1\${\begin{align*}#1\end{align*}}
\def\given{{\,|\,}}
\newcommand{\BlackBox}{\rule{1.5ex}{1.5ex}}  
\def\QED{~\rule[-1pt]{5pt}{5pt}\par\medskip}
\newenvironment{proof}{\par\noindent{\bf Proof\ }}{\hfill\BlackBox\\[2mm]}
\newtheorem{theorem}{Theorem}
\newtheorem{lemma}[theorem]{Lemma}
\newtheorem{proposition}[theorem]{Proposition}
\newtheorem{definition}[theorem]{Definition}
\newtheorem{assumption}[theorem]{Assumption}
\begin{document}

\title {\LARGE Global Convergence of Policy Gradient for Linear-Quadratic Mean-Field Control/Game in Continuous Time}

\author{Weichen Wang\thanks{Department of Operations Research and Financial Engineering,  Princeton University}$\;$, Jiequn Han\thanks{Department of Mathematics,  Princeton University}$\;$, Zhuoran Yang$^*$, and Zhaoran Wang\thanks{Department of Industrial Engineering and Management Sciences, Northwestern University}}

\date{ }

\maketitle

\medskip

\sloppy

\begin{abstract}
Reinforcement learning is a powerful tool to learn the optimal policy of possibly multiple agents by interacting with the environment. As the number of agents grow to be very large, the system can be approximated by a mean-field problem. Therefore, it has motivated new research directions for mean-field control (MFC) and mean-field game (MFG). In this paper, we study the policy gradient method for the linear-quadratic mean-field control and game, where we assume each agent has identical linear state transitions and quadratic cost functions. While most of the recent works on policy gradient for MFC and MFG are based on discrete-time models, we focus on the continuous-time models where some analyzing techniques can be interesting to the readers. For both MFC and MFG, we provide policy gradient update and show that it converges to the optimal solution at a linear rate, which is verified by a synthetic simulation. For MFG, we also provide sufficient conditions for the existence and uniqueness of the Nash equilibrium.
\end{abstract}

\textbf{Keywords:} Reinforcement learning, Mean-field control/game, Continuous linear dynamics, Policy gradient.

\section{Introduction}

Reinforcement learning (RL) \cite{sutton2018reinforcement} has become a very powerful tool for learning the optimal policy of a complicated system, with many successful applications including playing games achieving potential superhuman performance, such as Atari \cite{mnih2013playing}, GO \cite{silver2016mastering,silver2017mastering}, Poker \cite{heinrich2016deep, moravvcik2017deepstack}, multiplayer online video games Dota \cite{OpenAI_dota} and StarCraft \cite{vinyals2019alphastar}, and more realistic real-world problems, such as robotic control \cite{yang2004multiagent}, autonomous driving \cite{shalev2016safe}, and social dilemmas \cite{de2006learning,leibo2017multi,hughes2018inequity}. The above are just some illustrative examples. More generally, RL has been applied to design efficient algorithms for decision making to minimize the long-term expected overall cost through interacting with the environment sequentially. 

On a separate line of research, the subject of the optimal control assumes knowledge of the system dynamics and the observed reward/cost function, and studies the existence and uniqueness of the optimal solution. Extensive literature extends this area from the most basic setting of linear-quadratic regulator problem \cite{willems1971least,bertsekas1995dynamic,anderson2007optimal} to zero-sum game \cite{engwerda2005lq,zhang2005some} and to multi-agent control/game \cite{egerstedt2001formation,parsons2002game,shamma2008cooperative,semsar2009multi,dimarogonas2010stability}. However, the multi-agent control/game is typically computationally intractable for a large real-world problem, as the joint state and action spaces grow exponentially in the number of agents. Mean-field control/game proposed by \cite{huang2003individual,huang2006large,lasry2006jeux_a,lasry2006jeux_b,lasry2007mean} can be viewed as an approximation to the multi-agent control/game when the number of agents grows to infinity. In a mean-field control/game, each agent share the same cost function and state transition, which depend on other agents only through their aggregated mean effect. Consequently, each agent's optimal policy only depend on its own state and the mean-field state of the population. This symmetry across all agents significantly simplifies the analysis. Mean-field control/game has already found a lot of meaningful applications such as power grids \cite{minciardi2011optimal}, swarm robots \cite{fang2014lqr,araki2017multi} and financial systems \cite{zhou2000continuous,huang2018linear}.

Although the traditional optimal control approach lays a solid foundation for theoretical analysis, it fails to adapt well to the modern situation where we may have a huge system or complicated environment to explore. Therefore, recent years have witnessed increased interest in applying the RL techniques to various optimal control settings. See \cite{fazel2018global,zhang2019policy,bu2019global,elie2020convergence} for some examples. 
Specifically, this paper focuses on the RL technique of policy gradient \cite{sutton2000policy, kakade2002natural,silver2014deterministic}, where we update the policy following the gradient of the cost function, and the setting of the linear-quadratic mean-field control/game (MFC/MFG), where we assume each agent has identical linear state transition and quadratic cost function. The MFC differs from the MFG in that the former allows all the agents to directly control the mean-field state and collaborate in order to maximize the social welfare together, while the latter can only allow each agent to make individual decision with a guess on the mean-field output, hoping to achieve the Nash equilibrium of the system. The paper aims to show that policy gradient methods can achieves a desired linear convergence for both MFC and MFG. We choose the model-based approach for simplicity following the traditional optimal control approach for better presentation of the theoretical results and algorithm. The corresponding model-free algorithm to estimate the gradient can be derived similar to for example \cite{fazel2018global,carmona2019linear,fu2019actor}. 

Many of the recent stochastic mean-field control/game literature are based on the continuous-time models, e.g. \cite{bensoussan2013mean,cardaliaguet2017learning,carmona2018probabilistic}, where the main focus is on characterizing the properties of the optimal solution through solving a pair of Hamilton-Jacobi-Bellman (HJB) and Fokker-Planck (FP) equations, rather than designing provably efficient learning algorithms. However, new developments on policy gradient algorithms for MFC and MFG are mainly based on discrete-time models, e.g. \cite{elliott2013discrete,guo2019learning,carmona2019linear,fu2019actor}. One reason is that discrete-time models can be more straightforward to analyze. For example, \cite{fazel2018global} pioneered the techniques to show the theoretical global convergence of policy gradient for the classical linear-quadratic regulator (LQR) based on the discrete-time models. One contribution of the current paper is to extend those techniques to the setting of continuous-time stochastic models.

We organize the paper as follows. In Section \ref{sec:LQR}, we review the continuous-time classical LQR problem and show that the policy gradient converges to the optimal solution at a linear rate, with techniques designed for analyzing continuous stochastic dynamics. In Section \ref{sec:MFC}, we formulate the MFC problem and reveal that with some reparametrization, MFC can be readily transformed into a LQR problem. The MFG however is more involved to study, so we present the drifted LQR problem first in Section \ref{sec:DLQR} as an intermediate step towards analyzing policy gradient for MFG. In Section \ref{sec:MFG}, we provide an algorithm for solving MFG which provably also enjoys the linear convergence rate. The algorithm naturally contains two update steps: for a given mean-field state, each agent seeks the best response by solving a drifted LQR problem; then to find the Nash equilibrium, we update the mean-field state assuming each agent follows the best strategy. We will define the Nash equilibrium more concretely and provide sufficient conditions for its existence and uniqueness in Section \ref{sec:MFG} as well. Finally, we conclude the paper with a simple simulation and some discussions in Section \ref{sec:conclude}.  

{\bf Notations.} For a matrix $M$, we denote by $\|M\|_2$ (or $\|M\|$), $\|M\|_F$ the spectral and Frobenius norm, $\sigma_{\min}(M), \sigma_{\max}(M)$ its minimum and maximum singular value, and $\tr(M)$ the trace of $M$ when $M$ is a square matrix. Let $\la M,N \ra = \tr(M^\top N)$. We use $\|\alpha\|_2$ (or $\|\alpha\|$) to represent the $\ell_2$-norm of a vector $\alpha$. For scalars $a_1,\dots,a_n$, we denote by $\text{poly}(a_1,\dots, a_n)$ the polynomial of $a_1, \dots , a_n$.

\section{Linear-Quadratic Regulator}
\label{sec:LQR}

As the simplest optimal control problem,   linear quadratic regulator serves as a perfect baseline to examine the performance of reinforcement learning methods. Viewing LQR from the lens of Markov decision process (MDP),  the state and action spaces are  $\cX=\real^{d}$ and $\cU=\real^{k}$, respectively. The continuous-time state transition dynamics is specified as the SDE
\#\label{eq:lqr_model}
\ud X_t = (A X_t + B u_t) \ud t + D \ud W_t,
\#
where $W_t$ is standard $d$-dimensional Brownian motion. We consider the infinite-horizon time-average cost that each agent aims to minimize
\#
\limsup_{T\rightarrow \infty}\EE  \left[ \frac1T \int_0^T  c(X_t, u_t)\ud t\right], \quad X_0 \sim \mu_0,
\quad c( x, u )=x ^\top Q x  + u ^\top R u,
\#
where the initial state $X_0$ is assumed to be sampled from the initial distribution $\mu_0$. The $A\in \RR^{d\times d}$, $B\in \RR^{d\times k}$, $D\in \RR^{d\times d}$, $Q\in \RR^{d\times d}$, $R\in \RR^{k \times k}$  are matrices of proper dimensions with $Q, R  \succ 0$. 

It is known that the optimal action are linear in the corresponding state \cite{anderson2007optimal,bertsekas1995dynamic}. Specifically, the optimal actions satisfy $u_t^* =- K ^*X_t$ for all $t\geq 0$, where  $K^* \in \RR^{k \times d}$ 
can be written as 
$K^*  = R^{-1}  B^\top P^*$, with  $P^*$ being the solution to the continuous time algebraic Riccati equation 
\#\label{eq:riccati}
A ^\top P^* + P^*A  - P^* B R^{-1} B^\top P^*  + Q = 0.
\#

\subsection{Ergodic Cost and Relative Value Function}
Inspired by the form of the optimal policy, we consider the general linear policy
$u_t = -KX_t$, where $K \in \RR^{k \times d}$ is the parameter to be optimized. The state dynamics becomes
\#\label{eq:state_dyanmics}
\ud X_t = (A-BK) X_t \ud t + D \ud W_t.
\#
Unless otherwise specified, we assume $A-BK$ is stable, that is the real parts of all the eigenvalues of $A-BK$ are negative.
Denote the invariant distribution of \eqref{eq:state_dyanmics} as $\rho_K$. It is a Gaussian distribution $N(0, \Sigma_K)$, where $\Sigma_K$ satisfies the continuous Lyapunov equation
\# \label{eq:invar_dist}
(A-BK) \Sigma_K + \Sigma_K (A-BK)^\top + DD^\top = 0.
\# 
Then the associated ergodic cost and the relative value function can be expressed as
\#
J(K) & := \EE_{X_t \sim \rho_K} [c(X_t, u_t)]  = \EE_{X_t \sim \rho_K} [X_t^\top (Q + K^\top R K) X_t]  = \bigl \la Q + K^\top R K, \Sigma_K \bigr \ra. \label{eq:JK} \\
V_K(x) & := \EE \left[ \int_0^\infty [c(X_t, u_t) - J(K)] \ud t  \given X_0=x \right].
\#
Using dynamic programming, we have the Hamilton-Jacobi-Bellman (HJB) equation for $V_K(x)$
\# \label{eq:HJB}
c(x, -Kx) - J(K) + [(A-BK) x]^\top \nabla V_K(x) + \frac12 \bigl \la \nabla^2 V_K(x), DD^\top \bigr \ra = 0.
\#
Assuming the ansatz $V_K(x) = x^\top P_K x + C_K$ with a symmetric $P_K$ and plugging it into \eqref{eq:HJB}, we need the following two equations to be valid at the same time
\#
& (A-BK)^\top P_K + P_K (A-BK) + Q + K^\top RK = 0, \label{eq:def_PK} \\
& J(K) = \bigl \la P_K, DD^\top \bigr \ra. \label{eq:JK_equiv}
\#
To see it is possible, we combine \eqref{eq:invar_dist}\eqref{eq:JK}\eqref{eq:def_PK} and find
\$
J(K) 
& = \bigl \la Q + K^\top R K, \Sigma_K \bigr \ra = - \tr[ ((A-BK)^\top P_K + P_K (A-BK))\Sigma_K ] \\
& = - \tr[P_K (\Sigma_K (A-BK)^\top + (A-BK)\Sigma_K ) ] = \bigl \la P_K, DD^\top \bigr \ra.
\$
Therefore if $A-BK$ is stable, there exists a well-defined $P_K$ satisfying \eqref{eq:def_PK}\eqref{eq:JK_equiv} simultaneously. Note that by definition $\EE_{x\sim \rho_K} [V_K(x)] = 0$, so the constant term in $V_K(x)$ can be determined as
\$
C_K = \EE_{x\sim \rho_K} [x^\top P_K x] = \bigl \la P_K, \EE_{x \sim \rho_K} [xx^\top] \bigr \ra
= \bigl \la P_K, \Sigma_K \bigr \ra.
\$

\subsection{Policy Gradient and Convergence}

To implement the gradient descent method on $J(K)$, with a fixed stepsize $\eta$, we follow $K \gets K - \eta \nabla_K J(K)$. The following proposition gives out the explicit formula for $\nabla_K J(K)$.

\begin{proposition} (Expression of the gradient).
\label{prop:policy_gradient}
\#
\nabla_K J(K) = 2(RK - B^\top P_K) \Sigma_K = 2E_K \Sigma_K,
\#
where we define $E_K:= RK - B^\top P_K$.
\end{proposition}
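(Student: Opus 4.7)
The plan is to exploit the equivalent expression $J(K) = \bigl\la P_K, DD^\top \bigr\ra$ from \eqref{eq:JK_equiv} and to compute the differential of $P_K$ with respect to $K$ by linearizing the Lyapunov equation \eqref{eq:def_PK}. Fix a direction $\delta K \in \RR^{k \times d}$ and write $dP := \tfrac{\ud}{\ud \epsilon} P_{K + \epsilon \delta K}\big|_{\epsilon = 0}$; this derivative exists as a symmetric matrix depending linearly on $\delta K$ because $A - BK$ is stable and so the Sylvester operator $P \mapsto (A-BK)^\top P + P(A-BK)$ is invertible on symmetric matrices. Differentiating \eqref{eq:def_PK} then produces
\$
(A-BK)^\top dP + dP\,(A-BK) = H,
\$
with $H := (\delta K)^\top B^\top P_K + P_K B\,\delta K - (\delta K)^\top R K - K^\top R\,\delta K$ collecting all $\delta K$-linear contributions, coming from $-B\,\delta K$ and from $K^\top R K$.

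The next step is to multiply this Sylvester equation on the right by $\Sigma_K$ and take the trace. On the left, the cyclic property of $\tr$ together with the symmetry of $dP$ and $\Sigma_K$ makes the two resulting traces $\tr((A-BK)^\top dP\,\Sigma_K)$ and $\tr(dP\,(A-BK)\Sigma_K)$ coincide, so the left-hand side equals $2\,\tr(dP\,(A-BK)\Sigma_K)$; substituting the Lyapunov identity \eqref{eq:invar_dist} for $\Sigma_K$ and using the same symmetry trick one more time collapses this quantity to $-\tr(dP\,DD^\top) = -\bigl\la dP, DD^\top\bigr\ra$, which by the representation \eqref{eq:JK_equiv} is exactly $-\tfrac{\ud}{\ud\epsilon} J(K + \epsilon \delta K)\big|_{\epsilon = 0}$.

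For the right-hand side, I would expand $\tr(H \Sigma_K)$ into four traces and use the symmetry of $P_K$, $\Sigma_K$, and $R$ combined with the cyclic property to pair them up: each pair of transposed summands coincides, so the sum collapses to $2\,\tr((\delta K)^\top B^\top P_K \Sigma_K) - 2\,\tr((\delta K)^\top R K \Sigma_K) = 2\bigl\la (B^\top P_K - RK)\Sigma_K,\,\delta K\bigr\ra$. Combining the two sides then gives $\tfrac{\ud}{\ud\epsilon} J(K + \epsilon \delta K)\big|_{\epsilon = 0} = 2\bigl\la (RK - B^\top P_K)\Sigma_K,\,\delta K\bigr\ra$, and since $\delta K$ was arbitrary one reads off $\nabla_K J(K) = 2(RK - B^\top P_K)\Sigma_K = 2 E_K \Sigma_K$.

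The main obstacle is purely algebraic bookkeeping: carefully distinguishing uses of the cyclic trace identity from uses of symmetry of $P_K$, $\Sigma_K$, $R$, and $dP$, and arranging each of the four right-hand traces into the canonical inner-product form $\tr((\delta K)^\top\,\cdot\,) = \bigl\la\cdot,\delta K\bigr\ra$. No analytic subtlety is anticipated: differentiability of $K \mapsto P_K$ in a neighborhood of any stabilizing $K$ follows from the implicit function theorem applied to the Lyapunov equation \eqref{eq:def_PK}, whose associated Sylvester operator is invertible on symmetric matrices precisely when $A-BK$ is stable.
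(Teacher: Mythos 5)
Your proposal is correct and follows essentially the same route as the paper: differentiate the Lyapunov equation \eqref{eq:def_PK} defining $P_K$, pair the resulting Sylvester equation against $\Sigma_K$ via the trace and the identity \eqref{eq:invar_dist} to turn the left side into $-\la \ud P_K, DD^\top\ra = -\ud J(K)$, and read off the gradient from the right side. The only (welcome) addition is your explicit justification of differentiability of $K \mapsto P_K$ via invertibility of the Sylvester operator, which the paper leaves implicit.
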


With the above explicit formula for policy gradient, we present an upper bound for $J(K) - J(K^*)$ below, which shows the cost function is gradient dominated \cite{karimi2016linear}. This property is essential in establishing the linear convergence of policy gradient. 

\begin{lemma} (Gradient domination).
\label{lem:grad_domination}
\#
J(K) - J(K^*) \leq \frac{\|\Sigma_{K^*}\|}{\sigma_{\min}(R) \sigma^2_{\min}(DD^\top)} \tr(\nabla_K J(K)^\top \nabla_K J(K)).
\#
\end{lemma}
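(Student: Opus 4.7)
The plan is to derive a cost-difference identity that expresses $J(K) - J(K^*)$ as a quadratic form in the ``gap'' $\Delta := K - K^*$, show that this quadratic form is upper bounded by its maximum value over $\Delta$, and then translate the resulting bound into a statement on $\nabla_K J(K)$ via Proposition~\ref{prop:policy_gradient}. This mirrors the Fazel et al.\ discrete-time argument, but the key technical substitution is to replace telescoping along a trajectory by integration against the invariant distribution $\rho_{K^*}$.

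First, I would use the HJB equation~\eqref{eq:HJB} for $V_K(x) = x^\top P_K x + C_K$, together with $\nabla V_K(x) = 2P_K x$ and $\nabla^2 V_K(x) = 2P_K$, to rewrite, for an arbitrary action $u$,
\[
c(x,u) + (Ax+Bu)^\top \nabla V_K(x) + \tfrac{1}{2}\langle \nabla^2 V_K(x), DD^\top\rangle - J(K) = c(x,u) - c(x,-Kx) + 2[B(u+Kx)]^\top P_K x.
\]
Plugging in $u = -K^*x$, taking expectation under $\rho_{K^*}$, and invoking the fact that the infinitesimal generator of the $K^*$-dynamics applied to $V_K$ integrates to zero against $\rho_{K^*}$ (the continuous-time analogue of ``telescoping''), I obtain
\[
J(K^*) - J(K) = \EE_{x \sim \rho_{K^*}}\bigl[c(x,-K^*x) - c(x,-Kx) + 2 x^\top \Delta^\top B^\top P_K x\bigr].
\]
Expanding the quadratic costs, symmetrizing inside the expectation, and introducing $E_K = RK - B^\top P_K$, this simplifies to the continuous-time cost-difference formula
\[
J(K) - J(K^*) = 2\tr(\Sigma_{K^*} \Delta^\top E_K) - \tr(\Sigma_{K^*} \Delta^\top R \Delta).
\]

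The right-hand side is concave in $\Delta$ because its Hessian is negative definite (both $R$ and $\Sigma_{K^*}$ are positive definite), so completing the square produces the universal upper bound
\[
J(K) - J(K^*) \leq \tr(\Sigma_{K^*} E_K^\top R^{-1} E_K),
\]
attained at $\Delta = R^{-1} E_K$. I would then apply routine trace inequalities,
\[
\tr(\Sigma_{K^*} E_K^\top R^{-1} E_K) \leq \frac{\|\Sigma_{K^*}\|}{\sigma_{\min}(R)} \tr(E_K^\top E_K),
\]
and substitute $E_K = \tfrac{1}{2}\nabla_K J(K)\, \Sigma_K^{-1}$ from Proposition~\ref{prop:policy_gradient} to obtain
\[
\tr(E_K^\top E_K) \leq \frac{1}{4\sigma_{\min}^2(\Sigma_K)} \tr(\nabla_K J(K)^\top \nabla_K J(K)).
\]
The final ingredient is a lower bound on $\sigma_{\min}(\Sigma_K)$ in terms of $\sigma_{\min}(DD^\top)$, derived from the Lyapunov representation $\Sigma_K = \int_0^\infty e^{(A-BK)s} DD^\top e^{(A-BK)^\top s}\,ds$ via monotonicity of the Lyapunov solution with respect to the right-hand side $DD^\top$; this converts the denominator into $\sigma_{\min}^2(DD^\top)$ up to an absorbable constant.

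I expect the main obstacle to be the first step: getting the advantage-function/cost-difference identity right in continuous time requires careful bookkeeping with the infinitesimal generator and with stationarity of $\rho_{K^*}$, and one must symmetrize correctly when collapsing the quadratic expectations to trace form so that the correction term is exactly $\tr(\Sigma_{K^*}\Delta^\top R\Delta)$. A secondary care-point is the Lyapunov lower bound on $\sigma_{\min}(\Sigma_K)$: one must verify that it produces the factor $\sigma_{\min}^2(DD^\top)$ claimed in the lemma with any $K$-dependence absorbed, rather than leaving a separate $\sigma_{\min}(\Sigma_K)$ dependence that would have to be bounded elsewhere in the convergence analysis.
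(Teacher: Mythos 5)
Your route to the key cost-difference identity is genuinely different from the paper's, but it lands on exactly the same formula and is correct. The paper first proves a perturbation lemma expressing $P_{K'}-P_K$ as a Lyapunov integral, pairs it with $J(K)=\langle P_K, DD^\top\rangle$, and swaps the integral with the trace to obtain
\[
J(K)-J(K^*) \;=\; 2\tr\bigl(\Sigma_{K^*}\Delta^\top E_K\bigr) - \tr\bigl(\Sigma_{K^*}\Delta^\top R\,\Delta\bigr), \qquad \Delta := K-K^*,
\]
whereas you derive the same identity by evaluating the HJB residual of $V_K$ at the off-policy action $u=-K^*x$ and integrating against $\rho_{K^*}$, using that the generator of the $K^*$-dynamics annihilates $V_K$ in stationarity. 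Both are legitimate continuous-time substitutes for the discrete-time telescoping of Fazel et al.; your generator/advantage argument avoids introducing the $P_{K'}-P_K$ perturbation lemma, while the paper's Lyapunov-integral route has the side benefit that the same perturbation lemma is reused later (in the one-step descent estimate and the lower bound on $J(K)-J(K^*)$). From the identity onward the two proofs coincide: completion of the square in $\Delta$, the bound $\tr(\Sigma_{K^*}E_K^\top R^{-1}E_K)\le \|\Sigma_{K^*}\|\,\sigma_{\min}(R)^{-1}\tr(E_K^\top E_K)$, and the substitution $E_K=\tfrac12\nabla_K J(K)\Sigma_K^{-1}$.

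One caveat, on precisely the step you flag as delicate: the final inequality needs $\sigma_{\min}(\Sigma_K)\ge c\,\sigma_{\min}(DD^\top)$. The paper justifies this by asserting $\Sigma_K\succeq DD^\top$, which is the discrete-time fact (where the $t=0$ term of the sum already equals the driving covariance); for the continuous-time representation $\Sigma_K=\int_0^\infty e^{(A-BK)\tau}DD^\top e^{(A-BK)^\top\tau}\,\ud\tau$ it can fail (scalar example: $A-BK=-\lambda$ with $\lambda$ large gives $\Sigma_K=DD^\top/(2\lambda)$). Your ``absorbable constant'' does not repair this, so on this step your proposal is neither better nor worse than the paper's own proof; a fully rigorous version would have to retain $\sigma_{\min}(\Sigma_K)$, or a stability-dependent lower bound for it, in place of $\sigma_{\min}(DD^\top)$.
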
 

The following theorem is the main result for this section, revealing that policy gradient method for continuous-time LQR achieves linear convergence rate. Its proof, together with those for the above proposition and lemma can be found in Appendix \ref{app:proofs_LQR} of the supplemental material.

\begin{theorem} (Global convergence of model-based gradient descent).
\label{thm:gd1}
With an appropriate constant setting of the stepsize $\eta$ in the form of
$\eta = \text{poly}\left(\frac{\sigma_{\min}(Q)}{C(K_0)}, \sigma_{\min}(DD^\top), \| B\|^{-1}, \| R\|^{-1}\right)$, and number of iterations
\$
N \geq \frac{\| \Sigma_{K^*} \|}{\eta\sigma_{\min}^2(DD^\top) \sigma_{\min}(R)}\log\frac{J(K_0)-J(K^*)}{\varepsilon},
\$
the iterates of gradient descent enjoys $J(K_N) - J(K^*) \leq \varepsilon$. Comparing to Theorem 7 of \cite{fazel2018global} for the linear convergence of policy gradient for the discrete-time LQR, the results for the continuous case is simpler in that $\eta$ does not depend on $\| A \|$ and  $\sigma_{\min}(R)$.
\end{theorem}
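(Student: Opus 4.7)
The plan is to follow the standard two-step template for convergence of gradient descent on gradient-dominated nonconvex objectives: (i) derive a one-step descent inequality of the form $J(K') - J(K) \leq -\tfrac{\eta}{2}\|\nabla_K J(K)\|_F^2$ for $K' = K - \eta\nabla_K J(K)$ whenever $\eta$ is small enough, and (ii) combine it with Lemma \ref{lem:grad_domination} to obtain the linear contraction
\$
J(K') - J(K^*) \leq \Bigl(1 - \frac{\eta\,\sigma_{\min}(R)\,\sigma^2_{\min}(DD^\top)}{2\|\Sigma_{K^*}\|}\Bigr)\bigl(J(K) - J(K^*)\bigr).
\$
Taking logarithms and iterating $N$ times then yields the claimed bound.

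The one-step inequality rests on a cost-difference identity, obtained by writing $J(K) = \la P_K, DD^\top\ra$, rewriting $DD^\top$ through \eqref{eq:invar_dist} at $K'$, and substituting the value equation \eqref{eq:def_PK} at $K$. After simplification using $E_K = RK - B^\top P_K$ one arrives at
\$
J(K') - J(K) = 2\,\tr\bigl(\Sigma_{K'}(K'-K)^\top E_K\bigr) + \tr\bigl(\Sigma_{K'}(K'-K)^\top R (K'-K)\bigr).
\$
Plugging in $K'-K = -2\eta E_K \Sigma_K$ from Proposition \ref{prop:policy_gradient} and recalling $\|\nabla_K J(K)\|_F^2 = 4\|E_K \Sigma_K\|_F^2$ gives
\$
J(K') - J(K) = -4\eta\,\tr(\Sigma_{K'}\Sigma_K E_K^\top E_K) + 4\eta^2\,\tr(\Sigma_{K'}\Sigma_K E_K^\top R E_K \Sigma_K).
\$
If $\Sigma_{K'}$ is comparable to $\Sigma_K$, the first term is approximately $-\eta\|\nabla_K J(K)\|_F^2$, while the second term is bounded by $\eta^2\|R\|\,\|\Sigma_{K'}\|\,\|\nabla_K J(K)\|_F^2$; descent then holds once $\eta \lesssim 1/(\|R\|\,\|\Sigma_{K'}\|)$.

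The main obstacle is therefore the perturbation analysis needed to control $\Sigma_{K'}$ in terms of $\Sigma_K$ and simultaneously ensure stability of $A-BK'$. Subtracting \eqref{eq:invar_dist} at $K$ and $K'$ produces a Lyapunov equation for $\Sigma_{K'} - \Sigma_K$ with source of size $\|B\|\,\|K'-K\|\,\|\Sigma_K\|$, and the inverse of the Lyapunov operator $X \mapsto (A-BK')X + X(A-BK')^\top$ can be bounded in terms of $\|\Sigma_{K'}\|/\sigma_{\min}(DD^\top)$, as read off from \eqref{eq:invar_dist} itself. Combined with the a priori estimate $\|\Sigma_K\| \leq J(K)/\sigma_{\min}(Q)$ from \eqref{eq:JK}, and analogous bounds on $\|P_K\|$ and $\|E_K\|$ extracted from \eqref{eq:def_PK}--\eqref{eq:JK_equiv}, this lets us choose $\eta$ polynomially small in $\sigma_{\min}(Q)/J(K_0)$, $\sigma_{\min}(DD^\top)$, $\|B\|^{-1}$, and $\|R\|^{-1}$ so that $\|\Sigma_{K'} - \Sigma_K\| \leq \tfrac12 \|\Sigma_K\|$, $A-BK'$ remains stable (as certified by the continued existence of a finite invariant covariance), and the $\eta^2$ term is swallowed by half of the linear term.

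An induction on $n$ now closes the argument: the descent property $J(K_{n+1}) \leq J(K_n)$ preserves the bound $J(K_n) \leq J(K_0)$ and hence all the a priori bounds on $\|\Sigma_{K_n}\|, \|P_{K_n}\|, \|E_{K_n}\|$ that justified the initial choice of $\eta$, so the stepsize selected once at the start remains valid at every iteration. Chaining the descent with Lemma \ref{lem:grad_domination} gives the displayed contraction, and the stated count $N$ follows by taking logarithms. The absence of $\|A\|$ and $\sigma_{\min}(R)$ in $\eta$ (relative to \cite{fazel2018global}) is directly traceable to the continuous-time Lyapunov equation \eqref{eq:invar_dist} being linear rather than quadratic in $A-BK$, which removes one factor of $\|A-BK\|$ from the perturbation estimate and one factor of $R$ from the quadratic error term.
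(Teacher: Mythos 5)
Your proposal is correct and follows essentially the same route as the paper: the same cost-difference identity $J(K')-J(K)=\tr[\Sigma_{K'}(E_K^\top(K'-K)+(K'-K)^\top E_K+(K'-K)^\top R(K'-K))]$, the same perturbation control of $\Sigma_{K'}$ via the Lyapunov operator with the a priori bounds $\|\Sigma_K\|\le J(K)/\sigma_{\min}(Q)$ and $\|\cT_K\|\le\|\Sigma_K\|/\sigma_{\min}(DD^\top)$, and the same combination with Lemma \ref{lem:grad_domination} plus monotonicity of $J(K_n)$ to keep the stepsize valid across iterations. The only deviation is that your ``absorb the error into half the linear term'' bookkeeping yields a contraction factor of $1-\tfrac{\eta}{2}\sigma_{\min}(R)\sigma_{\min}^2(DD^\top)/\|\Sigma_{K^*}\|$ rather than the paper's $1-\eta\sigma_{\min}(R)\sigma_{\min}^2(DD^\top)/\|\Sigma_{K^*}\|$, which costs an immaterial factor of $2$ in $N$.
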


\section{ Linear-Quadratic Mean-Field Control}
\label{sec:MFC}

Now we consider a linear-quadratic regulator with mean-field interactions
\#\label{eq:MF_lqr_model}
\ud X_t = (A X_t + \bar{A} \EE_0[X_t] + B u_t + \bar{B} \EE_0[u_t]) \ud t + D \ud W_t + \bar{D} \ud W^0_t,
\#
in which $W_t, W^0_t$ are the idiosyncratic and common noise modeled by two independent $d$-dimensional Brownian motions and $\EE_0$ denotes the conditional expectation given $W^0_t$. The discrete version of the model has been considered in \cite{carmona2019linear}. Note that \eqref{eq:MF_lqr_model} also contains a mean-field action term. The agent seeks for policy in terms of $u_t = u(X_t, \EE_0[X_t])$ to minimize the following infinite-horizon time-average cost
\#
& \limsup_{T\rightarrow \infty}\EE  \left[ \frac1T \int_0^T  c(X_t, \EE_0[X_t], u_t, \EE_0[u_t])\ud t\right], \quad X_0 \sim \mu_0\,, \notag \\
& c(x, \bar{x}, u, \bar{u} )=x^\top Q x  + \bar{x}^\top \bar{Q} \bar{x} +
  u^\top R u + \bar{u}^\top \bar{R} \bar{u}, \quad Q,\bar Q, R, \bar R \succ 0\,. \label{eq:cost_mfc}
\#

\subsection{Reparametrization}
For this problem under some suitable conditions, one can prove the optimal control is a linear combination of $X_t$ and $\EE_0 [X_t]$, see e.g. \cite{carmona2018probabilistic}. We can actually recast the original MFC problem into a LQR problem with a larger state space. Specifically, motivated by the form of the optimal policy, we consider the general linear policy
\# \label{eq:MF_policy}
u_t = -K(X_t - \EE_0 [X_t]) - L \EE_0[X_t],
\#
where $\theta = (K, L)$ are the two parameter matrices to be optimized. Denote by $Y_t^1 = X_t - \EE_0 [X_t]$
and $Y_t^2 = \EE_0 [X_t]$. An important observation is that, under the policy \eqref{eq:MF_policy}, the dynamics of these two processes are decoupled
\$
\ud Y^1_t & = (A - BK) Y^1_t \ud t + D \ud W_t, \\
\ud Y^2_t & = (A+ \bar{A} - (B + \bar{B})L) Y^2_t \ud t + \bar{D} \ud W^0_t.
\$
Moreover, the running cost can also be written as a quadratic function of $(Y_t^1, Y_t^2)$. Therefore one can essentially optimize $K$ and $L$ similar to the LQR, and all the theoretical results should follow.

\section{Drifted Linear-Quadratic Regulator}
\label{sec:DLQR}

In this section, we extend the simplest linear SDE dynamics to include an intercept in the drift. This extension is going to be useful for MFG. The  state transition dynamics considered in this section is
\#\label{eq:lqr_model_intercept}
\ud X_t = (a + A X_t + B u_t) \ud t + D \ud W_t\,.
\#
The agent still aims to minimize the the same quadratic cost $c( x, u )=x ^\top Q x  + u ^\top R u$.

\subsection{Ergodic Cost}
We again consider the general linear policy, but with an extra intercept,
$u_t = -KX_t + b$,
where $K \in \RR^{k \times d}$ and $b \in \RR^k$ are the parameters to be optimized. The state dynamics becomes
\#\label{eq:state_dyanmics_intercept}
\ud X_t = ((A-BK) X_t + a + Bb) \ud t + D \ud W_t.
\#
The invariant distribution $\rho_{K,b}$ of \eqref{eq:state_dyanmics_intercept} is a Gaussian distribution $N(\mu_{K,b}, \Sigma_K)$, where $\mu_{K,b}$ satisfies
$\mu_{K,b} = -(A-BK)^{-1} (a+Bb)$
and $\Sigma_K$ does not depend on $b$ and still satisfies the continuous Lyapunov equation
$(A-BK) \Sigma_K + \Sigma_K (A-BK)^\top + DD^\top = 0$.
The associated ergodic cost can be expressed as
\#
\label{eq:J_decomp}
J(K,b) 
& := \EE_{X_t \sim \rho_{K,b}} [c(X_t, u_t)] = J_1(K) + J_2(K,b) \,,
\#
where $J_1(K)$ and $J_2(K,b)$ are defined as
$$
J_1(K) = \bigl \la Q + K^\top R K, \Sigma_K \bigr \ra = \bigl \la P_K, DD^\top \bigr \ra\,,
$$
$$
J_2(K,b) = \begin{pmatrix}
\mu_{K,b} \\
b
\end{pmatrix}^\top 
\begin{pmatrix}
Q + K^\top R K & -K^\top R \\
-RK & R
\end{pmatrix}
 \begin{pmatrix}
\mu_{K,b} \\
b
\end{pmatrix}
$$
Here $J_1(K)$ is the the expected total cost in the regular LQR problem without intercept and $P_K$ is the solution of the continuous Lyapunov equation \eqref{eq:def_PK}. Meanwhile, $J_2(K,b)$ corresponds the expected cost induced by the intercept drift.

\subsection{Policy Gradient and Convergence}
\begin{proposition}
\label{prop:J2}
The optimal intercept $b^K$ to minimize $J_2(K,b)$ for any given $K$ is that
\#\label{eq:b_opt}
b^K = -(KQ^{-1}A^\top + R^{-1}B^\top) (AQ^{-1}A^\top + BR^{-1}B^\top)^{-1} a
\#
Furthermore, $J_2(K,b^K)$ takes the form of 
\#\label{eq:J2_opt}
J_2(K,b^K) = a^\top (AQ^{-1}A^\top + BR^{-1}B^\top)^{-1} a
\#
which is independent of $K$.
\end{proposition}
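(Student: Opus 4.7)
The plan is to change variables in a way that eliminates the $K$-dependence altogether. Introduce $\bar u := -K\mu_{K,b} + b$, the stationary mean of the action $u_t = -KX_t + b$, and write $\mu := \mu_{K,b}$ so that $b = \bar u + K\mu$. Substituting $b = \bar u + K\mu$ into the three terms $\mu^\top(Q+K^\top R K)\mu$, $-2\mu^\top K^\top R b$, and $b^\top R b$ that make up $J_2$, and using $R = R^\top$, the three $\mu^\top K^\top R K\mu$ contributions (with coefficients $+1,-2,+1$) and the two $\mu^\top K^\top R\bar u$ cross terms (with opposite signs) cancel, leaving the clean identity
\$
J_2(K,b) = \mu^\top Q\mu + \bar u^\top R \bar u.
\$
Likewise, the stationarity relation $(A-BK)\mu + a + Bb = 0$ simplifies in the new variables to the $K$-free affine constraint $A\mu + B\bar u + a = 0$.

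Next I would argue that minimizing $J_2(K,b)$ over $b$ for fixed $K$ is equivalent to minimizing the reparametrized cost over all pairs $(\mu,\bar u)$ subject only to $A\mu + B\bar u = -a$. As $b$ ranges over $\RR^k$ with $K$ fixed, the pair $(\mu,\bar u)$ actually exhausts this feasible set: given any feasible $(\mu_\star,\bar u_\star)$, the choice $b = \bar u_\star + K\mu_\star$ reproduces it, since then $(A-BK)\mu_\star + a + Bb = A\mu_\star + B\bar u_\star + a = 0$ pins down $\mu_\star$ as the unique stationary mean by invertibility of $A-BK$ (guaranteed by the stability assumption). Therefore
\$
\min_b J_2(K,b) = \min_{\mu,\bar u:\, A\mu + B\bar u = -a} \bigl( \mu^\top Q\mu + \bar u^\top R\bar u \bigr),
\$
which is manifestly independent of $K$.

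The final step is to solve this strictly convex quadratic program. Form the Lagrangian $\mathcal{L} = \mu^\top Q\mu + \bar u^\top R\bar u + 2\lambda^\top(A\mu + B\bar u + a)$; the first-order conditions yield $\mu_\star = -Q^{-1}A^\top\lambda$ and $\bar u_\star = -R^{-1}B^\top\lambda$, and substituting into the constraint gives $\lambda = (AQ^{-1}A^\top + BR^{-1}B^\top)^{-1} a$, under the natural invertibility assumption on this Gramian (which holds whenever $[A\;\; B]$ has full row rank). The formula $b^K = \bar u_\star + K\mu_\star = -(KQ^{-1}A^\top + R^{-1}B^\top)\lambda$ then matches \eqref{eq:b_opt}, and plugging back in yields
\$
J_2(K, b^K) = \lambda^\top(AQ^{-1}A^\top + BR^{-1}B^\top)\lambda = \lambda^\top a = a^\top(AQ^{-1}A^\top + BR^{-1}B^\top)^{-1} a,
\$
which is exactly \eqref{eq:J2_opt}.

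The main obstacle I anticipate is the bookkeeping in the first step: carefully expanding the $2\times 2$ block quadratic form and tracking signs so that every $K$-dependent piece cancels. Once this key identity is in hand, the surjectivity observation and the Lagrange-multiplier calculation are routine, and the $K$-independence of the optimal value becomes transparent.
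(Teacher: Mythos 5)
Your proof is correct, and it arrives at the same Lagrange--multiplier computation as the paper but by a cleaner route. The paper treats $\min_b J_2(K,b)$ directly as a quadratic program in the stacked variable $(\mu,b)$ with the block matrix $M$ and constraint matrix $N$, and only sees the $K$-independence at the very end, after explicitly inverting $M$ and checking $N^\top M^{-1}N = AQ^{-1}A^\top + BR^{-1}B^\top$. Your substitution $\bar u = -K\mu + b$ is exactly the congruence by $T = \left(\begin{smallmatrix} I & 0 \\ K & I \end{smallmatrix}\right)$ that block-diagonalizes $M$ into $\mathrm{diag}(Q,R)$ and turns the constraint matrix $N^\top = (A-BK,\; B)$ into the $K$-free $(A,\; B)$; this makes the $K$-independence of $\min_b J_2(K,b)$ visible before any computation, explains why the paper's $N^\top M^{-1}N$ comes out as it does, and carries the pleasant interpretation that $J_2$ is just the running cost evaluated at the stationary means of state and action. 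You are also slightly more careful than the paper on one point: the paper asserts without justification that $\min_b J_2(K,b)$ is equivalent to the constrained optimization over the pair of variables, whereas your surjectivity argument (using invertibility of $A-BK$, which the standing stability assumption supplies) actually establishes that equivalence. The only hypothesis you invoke beyond the paper's is invertibility of $AQ^{-1}A^\top + BR^{-1}B^\top$, which the paper itself uses implicitly since that inverse appears in the statement.
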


Since $\min_b J_2(K,b)$ does not depend on $K$, it holds that the optimal $K^*$ can be obtained by minimizing $J_1(K)$ similar to the case of no intercept, that is, updating $K$ following the gradient direction $\nabla_K J_1(K)$. So the optimal $K^*$ does not depend on the intercept $a$ at all. Once we have the optimal $K^*$, the optimal $b^* = b^{K^*}$ is obtained by plugging in $K^*$ in \eqref{eq:b_opt}. From Proposition \ref{prop:policy_gradient}, we know
$\nabla_K J(K) = \nabla_K J_1(K) = 2(RK - B^\top P_K) \Sigma_K$.

Define $\mu^K$ to be the mean of the invariant density corresponding to $u_t = -KX_t + b^K$. Then $\mu^K = - (A-BK)^{-1}(a+ Bb^K) = -Q^{-1} A^\top (AQ^{-1}A^\top + BR^{-1}B^\top)^{-1} a$, which does not depend on $K$. The state dynamics can be written as 
\#
\ud (X_t - \mu^K)= (A-BK)(X_t - \mu^K) \ud t + D \ud W_t.
\#
And the cost function $J(K) = J(K,b^K) = J_1(K) + a^\top (AQ^{-1}A^\top + BR^{-1}B^\top)^{-1} a$. This means we can directly apply convergence theorem of the policy gradient for regular LQR to $X_t - \mu^K$. We relegate all the proofs to Appendix \ref{app:proofs_DLQR} of the supplemental material. 

\begin{theorem} (Global convergence for drifted LQR).
\label{thm:gd1_intercept}
With the stepsize $\eta$ in the same form as Theorem \ref{thm:gd1} and the number of iterations
\$
N \geq \frac{\| \Sigma_{K^*} \|}{\eta\sigma_{\min}^2(DD^\top) \sigma_{\min}(R)}\log\frac{J_1(K_0)-J_1(K^*)}{\varepsilon},
\$
if we follow $b^K = -(KQ^{-1}A^\top + R^{-1}B^\top) (AQ^{-1}A^\top + BR^{-1}B^\top)^{-1} a$, we have $J(K_N, b^{K_N}) - J(K^*, b^*) \leq \varepsilon$. 
Furthermore, 
\$
\|K_N - K^*\|_F \le \sigma_{\min}^{-1/2}(R)\sigma_{\min}^{-1/2}(DD^\top) \sqrt{\varepsilon}, \quad \|b^{K_N} - b^*\|_2 \le C_b(a) \sigma_{\min}^{-1/2}(R)\sigma_{\min}^{-1/2}(DD^\top) \sqrt{\varepsilon} \,,
\$
where $C_b(a) = \|Q^{-1}A^\top(AQ^{-1}A^\top+BR^{-1}B^\top)^{-1}a\|_2$ is a constant depending on the intercept $a$. 
\end{theorem}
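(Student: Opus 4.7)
The plan hinges on the cost decomposition in Proposition \ref{prop:J2}. With the optimal intercept choice $b = b^K$, that proposition gives $J(K, b^K) = J_1(K) + a^\top(AQ^{-1}A^\top + BR^{-1}B^\top)^{-1} a$, where the second summand does not depend on $K$. Consequently the minimizer in $K$ of $J(K, b^K)$ coincides with the LQR minimizer $K^*$ of $J_1$, we have $b^* = b^{K^*}$, and $\nabla_K J(K, b^K) = \nabla_K J_1(K)$. Running gradient descent on $K$ for the drifted problem is therefore exactly gradient descent on $J_1$ viewed as a standard continuous-time LQR cost with the shifted dynamics $\ud(X_t - \mu^K) = (A-BK)(X_t - \mu^K)\,\ud t + D\,\ud W_t$. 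Applying Theorem \ref{thm:gd1} with the stated stepsize and iteration count therefore yields $J_1(K_N) - J_1(K^*) \leq \varepsilon$, which by the decomposition equals $J(K_N, b^{K_N}) - J(K^*, b^*) \leq \varepsilon$.

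For the iterate bound on $K$, the plan is to establish a quadratic growth lower bound on $J_1(K) - J_1(K^*)$. Starting from $J_1(K) = \langle P_K, DD^\top \rangle$ and substituting the Lyapunov equation $-DD^\top = (A-BK)\Sigma_K + \Sigma_K(A-BK)^\top$, one rewrites $\langle P_K - P_{K^*}, DD^\top \rangle$ using cyclicity of the trace, then invokes the Riccati identities for $P_K$ and $P_{K^*}$ together with the optimality identity $B^\top P_{K^*} = RK^*$ (which follows from $E_{K^*} = 0$). After cancellation this yields the cost-difference formula
\$
J_1(K) - J_1(K^*) = \bigl\langle (K - K^*)^\top R (K - K^*),\, \Sigma_K \bigr\rangle \geq \sigma_{\min}(R)\, \sigma_{\min}(\Sigma_K)\, \|K - K^*\|_F^2.
\$
A lower bound $\sigma_{\min}(\Sigma_K) \geq \sigma_{\min}(DD^\top)$, obtainable from the integral representation $\Sigma_K = \int_0^\infty e^{(A-BK)t}\,DD^\top\,e^{(A-BK)^\top t}\,\ud t$ together with stability margins of $A-BK$ preserved along the iteration, combined with $J_1(K_N) - J_1(K^*) \leq \varepsilon$, then produces the stated bound on $\|K_N - K^*\|_F$.

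Finally, for $\|b^{K_N} - b^*\|_2$, the explicit formula \eqref{eq:b_opt} is linear in $K$, so subtracting the formulas for $b^{K_N}$ and $b^{K^*}$ cancels the $R^{-1}B^\top$ term and leaves $b^{K_N} - b^* = -(K_N - K^*)\,Q^{-1}A^\top(AQ^{-1}A^\top + BR^{-1}B^\top)^{-1}a$. Taking $\ell_2$ norms and using $\|Mx\|_2 \leq \|M\|_F\,\|x\|_2$ gives $\|b^{K_N} - b^*\|_2 \leq \|K_N - K^*\|_F \cdot C_b(a)$, and plugging in the bound from the previous paragraph finishes the proof. The main obstacle is the quantitative lower bound on $\sigma_{\min}(\Sigma_K)$ in terms of $\sigma_{\min}(DD^\top)$, uniformly in $K$ along the gradient trajectory; this is where the continuous-time analysis is more delicate than its discrete counterpart and requires carefully tracking stability of $A - BK$ throughout the iterations, a property implicitly guaranteed by the polynomial dependence of $\eta$ on $C(K_0)$ inherited from Theorem \ref{thm:gd1}.
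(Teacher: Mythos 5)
Your proposal follows essentially the same route as the paper's proof: reduce the convergence claim to Theorem \ref{thm:gd1} via the decomposition $J(K,b^K)=J_1(K)+a^\top(AQ^{-1}A^\top+BR^{-1}B^\top)^{-1}a$ from Proposition \ref{prop:J2}, obtain $J_1(K)-J_1(K^*)=\langle \Sigma_K,(K-K^*)^\top R(K-K^*)\rangle$ from the cost-difference identity with $E_{K^*}=0$, and exploit the linearity of $b^K$ in $K$ to transfer the bound to $b^{K_N}-b^*$. The one point you flag as the ``main obstacle,'' namely $\sigma_{\min}(\Sigma_K)\ge\sigma_{\min}(DD^\top)$, is handled in the paper simply by invoking the relation $\Sigma_K\succeq DD^\top$ already used in the proof of Lemma \ref{lem:grad_domination}, with no additional tracking of stability margins along the iterates.
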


\section{Linear-Quadratic Mean-Field Game} 
\label{sec:MFG}

The linear-quadratic MFG has the same dynamics in \eqref{eq:MF_lqr_model} and cost function \eqref{eq:cost_mfc} as the MFC problem. But the key difference is that MFC 
allows all the agents to conduct the control together, whereas in MFG each agent has to optimize its own objective assuming a guess of the mean-field state/action. Therefore, the ultimate goal of studying MFG is to see if multiple agents can reach a Nash equilibrium, where given the mean-field state/action, the policy of each agent is optimal and given all the agents carry out the optimal policy, we recover exactly the same mean-field state/action.  

So the idea of policy gradient for MFG is straightforward: for any given mean-field state/action, we update policy by following the gradient and then with the updated policy we update the mean-field state/action. We will provide sufficient conditions for the existence and uniqueness of Nash equilibrium and show that policy gradient can converge to the Nash equilibrium in linear rate. 

To that end, we need to study the linear quadratic control problem for any given mean-field state $\mu_x$ and mean-field action $\mu_u$:
\#\label{eq:MFG_lqr_model}
& \ud X_t = (A X_t + \bar{A} \mu_x + B u_t + \bar{B} \mu_u) \ud t + D \ud W_t + \bar{D} \ud W^0_t, \notag \\
& c(X_t, u_t) = X_t ^\top Q X_t +  u_t ^\top R u_t + \mu_x^\top \bar{Q} \mu_x  + \mu_u^\top \bar{R} \mu_u, \notag \\
& J_{(\mu_x,\mu_u)}(\pi) = \limsup_{T\rightarrow \infty}\EE  \left[ \frac1T \int_0^T  c(X_t, u_t)\ud t\right], \qquad X_0 \sim \mu_0,
\#
where $u_t$ is the action vector generated by playing policy $\pi$. Define $\mu = (\mu_x^\top, \mu_u^\top)^\top \in \RR^{d+k}$. We hope to find an optimal policy $\pi_{\mu}^* = \inf_{\pi \in \Pi} J_{\mu}(\pi)$.  This is clearly a drifted LQR problem with an intercept $\bar{A} \mu_x + \bar{B} \mu_u$ in the drift. As in the drifted LQR, we consider the class of linear policies with an intercept, that is, 
\#
\Pi = \{\pi(x) = -Kx +b: K\in \RR^{k\times d}, b\in \RR^k \}.
\#
Hence it suffices to find the optimal policy $\pi_{\mu}^*$ within $\Pi$.

Now, we introduce the definition of the Nash equilibrium \cite{saldi2018markov}. The Nash equilibrium is obtained if we can find a pair $(\pi^*,\mu^*)$, such that the policy $\pi^*$ is optimal for each agent when the mean-field state is $\mu^*$, while all the agents following the policy $\pi^*$ generate the mean-field state $\mu^*$ as $t \to \infty$. To present its formal definition, we define $\Lambda_1(\mu)$ as the optimal policy in $\Pi$ given the mean-field state $\mu$, and define $\Lambda_2(\mu, \pi)$ as the mean-field state generated by the policy $\pi$ given the current mean-field state $\mu$ as $t \to \infty$.

\begin{definition} (Nash Equilibrium Pair). 
The pair $(\mu^*, \pi^*) \in \RR^d \times \Pi$ constitutes a Nash equilibrium pair of \eqref{eq:MFG_lqr_model} if it satisfies $\pi^* = \Lambda_1(\mu^*)$ and $\mu^* = \Lambda_2(\mu^*,\pi^*)$. Here $\mu^*$ is called the Nash mean-field state/action and $\pi^*$ is called the Nash policy.
\end{definition}

\subsection{Existence and Uniqueness of Nash Equilibrium}
Let us first rewrite \eqref{eq:MFG_lqr_model} as follows:
\#\label{eq:MFG_lqr_model_equiv}
& \ud X_t = (\tilde a_{\mu} + A X_t + B u_t ) \ud t + \tilde D \ud \tilde W_t, \quad c(X_t, u_t) =X_t ^\top Q X_t +  u_t ^\top R u_t + \tilde C_{\mu},
\#
where $\tilde a_{\mu} = A \mu_x + B\mu_u$ is the intercept in the drift term, $\tilde D = (D, \bar D) \in \RR^{d\times 2d}$ is an expanded matrix, $\tilde W_t = (W_t^\top, {W^0_t}^\top)^\top \in \RR^{2d}$ is $2d$-dimensional Brownian motion, $\tilde C_{\mu} = \mu_x^\top \bar{Q} \mu_x  + \mu_u^\top \bar{R} \mu_u$ is a constant. So this is exactly the drifted LQR problem we considered in \eqref{eq:lqr_model_intercept} with the same quadratic cost function ignoring the constant term. 

Therefore, for the mapping $\pi_{\mu}^* = \Lambda_1(\mu)$, from \eqref{eq:b_opt} in Proposition \ref{prop:J2}, we know $\pi_{\mu}^* (x) = - K^* x + b_{\mu}^*$ where
\#
b_{\mu}^* = -(K^* Q^{-1}A^\top + R^{-1}B^\top) (AQ^{-1}A^\top + BR^{-1}B^\top)^{-1} \tilde a_{\mu} \,.
\#
Note that $K^*$ is fixed for all $\mu$. For the mapping $\mu_{\text{new}} = \Lambda_2(\mu,\pi) = (\mu_{\text{new},x}^\top, \mu_{\text{new},u}^\top)^\top$ where $\pi(x) = - K_\pi x + b_{\pi}$, we see the new mean of the mean-field state/action should be
\# 
& \mu_{\text{new},x} = - (A-BK_\pi)^{-1} (Bb_\pi + \tilde\alpha_{\mu}) \,, \label{eq:lambda2_x} \\
& \mu_{\text{new},u} = b_\pi + K_\pi (A-BK_\pi)^{-1} (Bb_\pi + \tilde\alpha_{\mu}) \,. \label{eq:lambda2_u}
\#

With the more detailed formulas for the mapping $\Lambda_1$ and $\Lambda_2$, we then establish the existence and uniqueness of the Nash equilibrium. The following regularity conditions are required.

\begin{assumption} We assume the following conditions hold.
\label{assump:nash}

(i) The continuous-time Riccati equation $A ^\top P^* + P^*A  - P^* B R^{-1} B^\top P^*  + Q = 0$ admits a unique symmetric positive definite solution $P^*$. 

(ii) The optimal $K^* = R^{-1} B^\top P^*$. It holds that $L_0 = L_1 L_3 + L_2 < 1$, where 
\#
L_1 &= \Big\|K^* Q^{-1} A^\top + R^{-1}B^\top\Big\| \max\Big\{\Big\| \Gamma^{-1} \bar A\Big\|, \Big\|\Gamma^{-1}\bar B\Big\| \Big\}\,, \\
L_2 &= \max\Big\{ \|\Delta_A\| +  \|K^*\Delta_A\|,  \|\Delta_B\| + \|K^*\Delta_B\| \Big\}\,, \\
L_3 &= \|(A-BK^*)^{-1} B\| + \|I + K^*(A-BK^*)^{-1}B\|\,,
\#
where $\Gamma = AQ^{-1}A^\top + B R^{-1}B^\top$, $\Delta_A = (A-BK^*)^{-1}\bar A$,$\Delta_B = (A-BK^*)^{-1} \bar B$.
\end{assumption}

\begin{proposition} (Existence and Uniqueness of Nash Equilibrium). 
\label{prop:uniqueness_nash}
Under Assumption \ref{assump:nash}, the operator $\Lambda(\cdot) = \Lambda_2(\cdot,\Lambda_1(\cdot))$ is $L_0$-Lipschitz, where $L_0$ is given in Assumption \ref{assump:nash}. Moreover, there exists a unique Nash equilibrium pair $(\mu^*, \pi^*)$ of the MFG.
\end{proposition}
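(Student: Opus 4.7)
The plan is to read off that a Nash equilibrium pair corresponds exactly to a fixed point of the self-map $\Lambda \colon \RR^{d+k} \to \RR^{d+k}$ defined by $\Lambda(\mu) = \Lambda_2(\mu, \Lambda_1(\mu))$, and then invoke the Banach fixed-point theorem. Given such a $\mu^*$, the policy $\pi^* := \Lambda_1(\mu^*)$ automatically satisfies both defining equalities of the Nash pair. So the entire statement reduces to proving that $\Lambda$ is a strict contraction with Lipschitz constant exactly the $L_0 = L_1 L_3 + L_2$ from Assumption~\ref{assump:nash}.

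To bound the Lipschitz constant I would write $\Lambda$ out explicitly by composing the two formulas given in the excerpt. From Proposition~\ref{prop:J2} (applied to the drifted LQR form \eqref{eq:MFG_lqr_model_equiv}), $\Lambda_1(\mu) = (K^*, b_\mu^*)$ where $K^*$ is independent of $\mu$ and $b_\mu^* = -(K^*Q^{-1}A^\top + R^{-1}B^\top)\Gamma^{-1}\tilde a_\mu$ with $\tilde a_\mu = \bar A\mu_x + \bar B\mu_u$. Plugging $(K_\pi, b_\pi) = (K^*, b_\mu^*)$ into \eqref{eq:lambda2_x}--\eqref{eq:lambda2_u} yields
\#
\Lambda(\mu)_x &= -(A-BK^*)^{-1}\bigl(B\,b_\mu^* + \tilde a_\mu\bigr), \notag\\
\Lambda(\mu)_u &= b_\mu^* + K^*(A-BK^*)^{-1}\bigl(B\,b_\mu^* + \tilde a_\mu\bigr). \notag
\#
From this representation it is transparent that $\mu$ influences $\Lambda(\mu)$ through two channels: a direct channel via $\tilde a_\mu$, and an indirect channel via the intercept $b_\mu^*$ of the best-response policy.

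Next I would bound each channel separately in a difference $\Lambda(\mu)-\Lambda(\mu')$. For the indirect channel, the linearity of $b_\mu^*$ in $\mu$ gives $\|b_\mu^* - b_{\mu'}^*\| \le L_1 \|\mu - \mu'\|$ exactly by the definition of $L_1$ (since $\|\Gamma^{-1}(\tilde a_\mu - \tilde a_{\mu'})\|$ is controlled by $\max\{\|\Gamma^{-1}\bar A\|, \|\Gamma^{-1}\bar B\|\}$ times $\|\mu-\mu'\|$). Feeding this into the formulas above via the multiplier $B$ picks up the factors $\|(A-BK^*)^{-1}B\|$ for the $x$-block and $\|I + K^*(A-BK^*)^{-1}B\|$ for the $u$-block, which aggregate into $L_3$; altogether the indirect channel contributes $L_1 L_3 \|\mu-\mu'\|$. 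For the direct channel, the term $\tilde a_\mu$ is fed through $(A-BK^*)^{-1}$, producing $\Delta_A = (A-BK^*)^{-1}\bar A$ and $\Delta_B = (A-BK^*)^{-1}\bar B$; collecting the $x$- and $u$-blocks (with the $u$-block picking up the extra $K^*$ factor) gives a contribution bounded by $L_2\|\mu-\mu'\|$. Summing the two channels and taking the max norm over blocks yields $\|\Lambda(\mu) - \Lambda(\mu')\| \le (L_1 L_3 + L_2)\|\mu - \mu'\| = L_0\|\mu - \mu'\|$.

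Since $L_0 < 1$ by Assumption~\ref{assump:nash} and $\RR^{d+k}$ is a complete metric space, the Banach fixed-point theorem delivers a unique $\mu^*$ with $\Lambda(\mu^*) = \mu^*$; setting $\pi^* = \Lambda_1(\mu^*)$ then gives the unique Nash equilibrium pair. The main obstacle, and the only non-routine part, is the bookkeeping in the second step: one has to carefully match the two channels in the explicit expression above to the exact norms defining $L_1$, $L_2$, $L_3$, being attentive to the block structure $\mu = (\mu_x, \mu_u)$ (which is why both $\|\Gamma^{-1}\bar A\|$ and $\|\Gamma^{-1}\bar B\|$ enter with a maximum, and why the $u$-block has the $\|K^*\Delta_A\|$, $\|K^*\Delta_B\|$, and $\|I + K^*(A-BK^*)^{-1}B\|$ terms). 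Once the bookkeeping is done, the contraction constant and the fixed-point argument close the proof immediately.
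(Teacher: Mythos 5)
Your proposal is correct and follows essentially the same route as the paper: the ``direct'' and ``indirect'' channels you identify are exactly the paper's triangle-inequality split of $\Lambda_2(\mu_1,\Lambda_1(\mu_1))-\Lambda_2(\mu_2,\Lambda_1(\mu_2))$ into a term varying the policy at fixed $\mu$ (Lipschitz constant $L_3$, composed with the $L_1$-Lipschitz map $\Lambda_1$) and a term varying $\mu$ at fixed policy (constant $L_2$), followed by the Banach fixed-point theorem. The only cosmetic difference is that the paper measures $\|\mu_1-\mu_2\|$ as the \emph{sum} of the $x$- and $u$-block norms (which is what pairs with the maxima in the definitions of $L_1$ and $L_2$), not a max over blocks as you state at the end.
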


\subsection{Policy Gradient Algorithm and Convergence}
To achieve the Nash equilibrium, the natural algorithm is that (i) for any given mean-field state/action $\mu_s$, we solve the drifted LQR problem in \eqref{eq:MFG_lqr_model} until reasonably accuracy by policy gradient update, say $J_{\mu_s}(\pi_{s+1}) - J_{\mu_s}(\pi_{\mu_s}^*) \le \varepsilon_s$ where $\pi_{\mu_s}^* = \Lambda_1(\mu_s)$ and $\varepsilon_s$ will be determined later; (ii) with the given $\pi_{s+1}$, we update the mean-field state/action $\mu_{s+1}$ by $\mu_{s+1} = \Lambda_2(\mu_s,\pi_{s+1})$ where the detailed formulas for $\Lambda_2(\cdot,\cdot)$ are provided in \eqref{eq:lambda2_x} \eqref{eq:lambda2_u}. We summarize the above procedure in Algorithm \ref{alg1}.

\begin{algorithm} 
\caption{Policy Gradient for Mean-Field Game}
\label{alg1}
\hspace*{\algorithmicindent} \textbf{Input:} $\;$ Total number of iterations $S$, stepsize $\eta$, number of iterations $N_s$ for each policy update; \\
\hspace*{\algorithmicindent} $\quad\quad\quad\;$ Initial mean-field state/action $\mu_0$, initial policy $\pi_0$ with parameters $K_{\pi_0}$ and $b_{\pi_0}$. \\
\hspace*{\algorithmicindent} \textbf{Output:} Pair $(\pi_S, \mu_S)$. 
\begin{algorithmic}[1]
    \FOR{$s = 0,1,\dots, S-1$}
        \STATE \textbf{Policy Update:}
        \STATE $K^0 = K_{\pi_s}$; $\tilde a_{\mu_s} \gets  \bar{A} \mu_{s,x} + \bar{B} \mu_{s,u}$;
        \FOR{$n = 0,1,\dots, N_s-1$}
        		\STATE $K^{n+1} \gets K^n - 2 \eta (RK^n - B^\top P_{K^n}) \Sigma_{K^n}$;
	\ENDFOR 
	\STATE $K_{\pi_{s+1}} \gets K^{N_s}$;
	\STATE $b_{\pi_{s+1}} = -(K_{\pi_{s+1}} Q^{-1}A^\top + R^{-1}B^\top) (AQ^{-1}A^\top + BR^{-1}B^\top)^{-1} \tilde a_{\mu_s}$;
	\STATE $\pi_{s+1} (x) = - K_{\pi_{s+1}} x + b_{\pi_{s+1}}$;
        \STATE \textbf{Mean-Field State/Action Update:} 
        \STATE $\mu_{s+1,x} \gets - (A-BK_{\pi_{s+1}})^{-1} (B b_{\pi_{s+1}} + \tilde\alpha_{\mu_s})$;
        \STATE $\mu_{s+1,u} \gets b_{\pi_{s+1}} + K_{\pi_{s+1}} (A-BK_{\pi_{s+1}})^{-1} (Bb_{\pi_{s+1}} + \tilde\alpha_{\mu_s})$;
    \ENDFOR
\end{algorithmic}
\end{algorithm}

We have the following theorem to show the linear convergence of Algorithm \ref{alg1} to the MFG Nash equilibrium. The proof is deferred to Appendix \ref{app:proofs_MFG} in the supplementary material.

\begin{theorem} (Convergence of Algorithm \ref{alg1}).
\label{thm:conv_mfg}
For a sufficiently small tolerance $0 < \varepsilon < 1$, we choose the number of iterations $S$ in Algorithm \ref{alg1} such that 
\#
S \geq \frac{\log(2\|\mu_0 - \mu^*\|_2 \cdot \varepsilon^{-1})}{\log(1/L_0)}\,.
\#
For any $s = 0, 1, \dots, S-1$, define 
\#
\varepsilon_s &= \min\Big\{ 2^{-2} \|B\|_2^{-2} \|(A-BK^*)^{-1}\|_2^{-2}, C_b(\mu_s)^{-2} \varepsilon^2, \notag \\
& \quad\quad\quad\;\;\; 2^{-2s-4} (L_3C_b(\mu_s) + 2C_K(\mu_s))^{-2} \varepsilon^2, \varepsilon^2  \Big\} \cdot \sigma_{\min}(R)\sigma_{\min}(DD^\top)\,,
\#
where $C_b(\mu_s) = \|Q^{-1}A^\top(AQ^{-1}A^\top+BR^{-1}B^\top)^{-1} \tilde a_{\mu_s}\|_2$ and $C_K(\mu_s) = (\|\tilde\alpha_{\mu_s}\|_2 + (1 + L_1 \|\mu_s\|_2) \|B\|_2) \cdot (\|(A-BK^*)^{-1}\|_2 + (1+\|K^*\|_2) \|(A-BK^*)^{-1}\|_2^2 \|B\|_2 )$.
In the $s$-th policy update, we choose the stepsize $\eta$ as in Theorem \ref{thm:gd1}  and number of iterations 
\$
N_s \geq \frac{\| \Sigma_{K^*} \|}{\eta\sigma_{\min}^2(DD^\top) \sigma_{\min}(R)}\log\frac{J_{\mu_s,1}(K_{\pi_s})-J_{\mu_s,1}(K^*)}{\varepsilon_s},
\$
such that $J_{\mu_s}(K_{\pi_{s+1}}, b_{\pi_{s+1}}) - J_{\mu_s}(K^*, b_{\mu_s}^*) \leq \varepsilon_s$ where $K^*, b_{\mu_s}^*$ are parameters of the optimal policy $\pi_{\mu_s}^* = \Lambda_1(\mu_s)$ generated from mean-field state/action $\mu_s$,  $J_{\mu_s}(K_{\pi}, b_{\pi}) = J_{\mu_s}(\pi)$ is defined in the drifted MFG problem \eqref{eq:MFG_lqr_model}, and $J_{\mu_s,1}(K_{\pi})$ is defined in \eqref{eq:J_decomp} corresponding to $J_{\mu_s}(K_{\pi}, b_{\pi})$. Then it holds that 
\#
\|\mu_S - \mu^* \|_2 \le \varepsilon, \quad\quad \|K_{\pi_{S}} - K^*\|_F \le \varepsilon, \quad\quad \|b_{\pi_S} - b^*\|_2 \le (1+L_1) \varepsilon.
\#
Here $\mu^*$ is the Nash mean-field state/action, $K_{\pi_{S}}, b_{\pi_S}$ are parameters of the final output policy $\pi_S$, and $K^*, b^*$ are the parameteris of the Nash policy $\pi^* = \Lambda_1(\mu^*)$.
\end{theorem}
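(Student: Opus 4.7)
The plan is to view Algorithm \ref{alg1} as a noisy fixed-point iteration for the composite operator $\Lambda = \Lambda_2(\cdot, \Lambda_1(\cdot))$, which by Proposition \ref{prop:uniqueness_nash} is an $L_0$-contraction with unique fixed point $\mu^*$. Each outer step replaces the exact application of $\Lambda_1$ by a drifted-LQR policy-gradient output that is only $\varepsilon_s$-accurate in value, so the update has the form $\mu_{s+1} = \Lambda(\mu_s) + \delta_s$. I will tune $\varepsilon_s$ so that $\|\delta_s\|_2 \le 2^{-s-2}\varepsilon$; then the $L_0$-Lipschitz property gives $\|\mu_{s+1}-\mu^*\|_2 \le L_0 \|\mu_s - \mu^*\|_2 + \|\delta_s\|_2$, whose unrolling together with $\sum_{s\ge 0} 2^{-s-2} \le 1/2$ and the stipulated lower bound on $S$ produces $\|\mu_S - \mu^*\|_2 \le L_0^S\|\mu_0-\mu^*\|_2 + \varepsilon/2 \le \varepsilon$.

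The bound $\|\delta_s\|_2 \le 2^{-s-2}\varepsilon$ is obtained in two stages. First, applying Theorem \ref{thm:gd1_intercept} to the inner loop (which is exactly a drifted LQR with intercept $\tilde a_{\mu_s}$) converts the value-gap guarantee $J_{\mu_s}(\pi_{s+1}) - J_{\mu_s}(\pi_{\mu_s}^*) \le \varepsilon_s$ into $\|K_{\pi_{s+1}} - K^*\|_F \le \sigma_{\min}^{-1/2}(R)\sigma_{\min}^{-1/2}(DD^\top)\sqrt{\varepsilon_s}$ and $\|b_{\pi_{s+1}} - b_{\mu_s}^*\|_2 \le C_b(\mu_s)\sigma_{\min}^{-1/2}(R)\sigma_{\min}^{-1/2}(DD^\top)\sqrt{\varepsilon_s}$; the safety-margin entry $2^{-2}\|B\|_2^{-2}\|(A-BK^*)^{-1}\|_2^{-2}$ in the minimum defining $\varepsilon_s$ guarantees via a Neumann series that $A-BK_{\pi_{s+1}}$ is invertible with $\|(A-BK_{\pi_{s+1}})^{-1}\|_2 \le 2\|(A-BK^*)^{-1}\|_2$, a prerequisite for the second stage. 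Second, subtracting the closed-form expressions \eqref{eq:lambda2_x}--\eqref{eq:lambda2_u} for $\mu_{s+1} = \Lambda_2(\mu_s, \pi_{s+1})$ and $\Lambda(\mu_s) = \Lambda_2(\mu_s, \pi_{\mu_s}^*)$ and applying the resolvent identity
\[
(A-BK_1)^{-1} - (A-BK_2)^{-1} = (A-BK_1)^{-1} B(K_1 - K_2)(A-BK_2)^{-1}
\]
decomposes $\delta_s$ into a $K$-contribution bounded by $2C_K(\mu_s)\|K_{\pi_{s+1}} - K^*\|_F$ and a $b$-contribution bounded by $L_3\|b_{\pi_{s+1}} - b_{\mu_s}^*\|_2$, with $C_K(\mu_s)$ and $L_3$ the constants defined in the theorem statement and Assumption \ref{assump:nash}. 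Substituting the first-stage bounds and using the $2^{-2s-4}(L_3 C_b(\mu_s) + 2C_K(\mu_s))^{-2}\varepsilon^2\sigma_{\min}(R)\sigma_{\min}(DD^\top)$ entry of $\varepsilon_s$ then yields $\|\delta_s\|_2 \le 2^{-s-2}\varepsilon$.

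The final parameter bounds are post-processing. The $\varepsilon^2\sigma_{\min}(R)\sigma_{\min}(DD^\top)$ entry of $\varepsilon_{S-1}$ plugged into the first-stage inequality directly gives $\|K_{\pi_S} - K^*\|_F \le \varepsilon$, and $\|b_{\pi_S} - b^*\|_2 \le (1+L_1)\varepsilon$ follows from the triangle split $\|b_{\pi_S} - b^*\|_2 \le \|b_{\pi_S} - b_{\mu_{S-1}}^*\|_2 + \|b_{\mu_{S-1}}^* - b^*\|_2$, controlling the first summand via the $C_b(\mu_{S-1})^{-2}\varepsilon^2\sigma_{\min}(R)\sigma_{\min}(DD^\top)$ entry of $\varepsilon_{S-1}$ and the second via the $L_1$-Lipschitz dependence of $\mu \mapsto b_\mu^*$ read off from Proposition \ref{prop:J2}, combined with the contraction estimate on $\|\mu_{S-1} - \mu^*\|_2$ supplied by the same outer recursion. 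The main obstacle is the perturbation analysis in the second stage above: the formulas for $\Lambda_2$ involve $(A-BK_{\pi_{s+1}})^{-1}$, which depends nonlinearly on the inexact $K_{\pi_{s+1}}$, and the constants $L_3$, $C_K(\mu_s)$, $C_b(\mu_s)$ must be tracked tightly enough to reproduce exactly those appearing in the definition of $\varepsilon_s$; once that resolvent expansion is carried out cleanly, the remaining outer-loop contraction is the standard Banach iteration with summable noise.
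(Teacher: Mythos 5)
Your proposal is correct and follows essentially the same route as the paper's proof: the "noisy fixed-point iteration" decomposition $\|\mu_{s+1}-\mu^*\|_2 \le \|\delta_s\|_2 + L_0\|\mu_s-\mu^*\|_2$ is exactly the paper's $E_1+E_2$ split, the resolvent/Neumann treatment of $(A-BK_{\pi_{s+1}})^{-1}$ reproduces the paper's bounds $F_1 \le L_3 C_b(\mu_s)\sqrt{\varepsilon_s/(\sigma_{\min}(R)\sigma_{\min}(DD^\top))}$ and $F_2 \le 2C_K(\mu_s)\|K_{\pi_{s+1}}-K^*\|_2$, and the geometric choice $\|\delta_s\|_2\le 2^{-s-2}\varepsilon$ with the final triangle splits for $K_{\pi_S}$ and $b_{\pi_S}$ matches the paper step for step.
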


Theorem \ref{thm:conv_mfg} shows the linear convergence of the proposed Algorithm \ref{alg1}. This confirms that for the continuous-time MFG, policy gradient can achieve the ideal linear convergence performance in finding Nash equilibrium. This lays an important theoretical foundations for applying modern reinforcement learning techniques to the general continuous mean-field games.

\section{Simulation and Conclusion}
\label{sec:conclude}
The paper aims to focus on the policy gradient method for the continuous-time MFC and MFG under the same framework. Specifically, we provide the linear convergence of the policy gradient algorithm for each problem setting. Although the paper is theory oriented, we demonstrate the theory through a simple simulation in Appendix \ref{app:simulation} of the supplementary material and comment more on the comparison of MFC and MFG. The key observation is that MFG accumulates a larger total cost compared to MFC, although Nash equilibrium has been reached. In MFG, obviously agents have no control over the mean-field state and do not access $\bar Q, \bar R$ at all.

A key limitation of the current work is that all the results are model-based, although the corresponding model-free algorithm to approximate the policy gradient, either by an environment simulator \cite{carmona2019linear} or by an actor-critic algorithm \cite{fu2019actor}, can be combined with the theoretical results in this paper. In addition, other variations of MFC and MFG can be considered for future research, including risk-sensitive mean-field setting \cite{tembine2013risk}, robust mean-field games \cite{bauso2012robust} and mean-field models with partially observed information \cite{saldi2019approximate}.  

\section*{Broader Impact}
Theoretical understanding of reinforcement learning is essential in evaluating its potential for more general applications involving real world big systems. Along this line, researchers still have a long way to accomplish a comprehensive understanding for different problem settings such as discrete vs continuous, linear-quadratic vs general, classical LQR vs multi-agent control/game. In this work, we are motivated to extend our understanding of the policy gradient algorithm to the problem of continuous-time linear-quadratic mean-field control and game under a unified framework. Our analysis serves as a step towards filling in some small theoretical gaps in the big picture.

\appendix

\section{Simulation for Model-based MFC and MFG} 
\label{app:simulation}

In this section, we give some numerical results to demonstrate the linear convergence of policy gradient algorithm for MFC and MFG, and make some numerical comparison of them as well. We consider the following setting:
\$
A = \begin{pmatrix}
-1 & 0.1 & -0.05 \\
0.05 & -1 & -0.05 \\
0 & 0 & -1
\end{pmatrix}\,, \quad
B = \begin{pmatrix}
-0.5 \\
-0.5 \\
0.8
\end{pmatrix}\,,
\$
and $\bar A = -0.5A$, $\bar B = -0.5 B$, $D = \bar D = I_3$, $Q = 0.1I_3, \bar Q = 0.05I_3$, $R = 1, \bar R = 2$. The continuous-time Riccati equation has the following solution 
\$
P^*= \begin{pmatrix}
0.04979778 & 0.00336704 & -0.00080209 \\
0.00336704 &  0.0499634 & -0.00082373 \\
-0.00080209 & -0.00082373 &  0.04927204
\end{pmatrix}\,.
\$
We can also manually check that the conditions in Assumption \ref{assump:nash} hold. Actually $L1 = 0.030110, L2 = 0.570206, L3 = 2.020098$ and $L_0 = 0.631032 < 1$.

For MFC, we start iterations from $K = 0, L = 0$, which are indeed stabilizing. We choose $\eta = 0.01$ and let the policy gradient method run for $N = 200$ updates. The linear convergence can be clearly seen from the left plot of Figure \ref{fig:fig1}, where we plot $\log(J(K,L) - J(K^*,L^*))$ against $n = 1,2,\dots,N$. For MFG, we start iterations from $K = 0, b = 0, \mu_x = 0.5(1,1,1)^\top, \mu_u = 0.5$, and set $\eta = 0.005$, the total number of iterations $S = 10$ for the outer loop, and for each $s=1,\dots,10$ the number of iterations $N_s = 20$ for the inner policy gradient updates. The right plot of Figure \ref{fig:fig1} shows $\log(J_{\mu_s}(K_{\pi_s}, b_{\pi_s}) - J_{\mu^*}(K^*,b^*))$ against $s = 1,2,\dots,10$. The linear convergence of the algorithm matches well with our theoretical results. Note that here $J(K,L)$ is the cost of the MFC problem \eqref{eq:cost_mfc}, while $J_{\mu_s}(K_{\pi_s}, b_{\pi_s})$ is the the cost of the drifted LQR problem \eqref{eq:MFG_lqr_model} corresponding to MFG. It is not hard to calculate that $J(K^*,L^*) = 0.598563$ and $J_{\mu^*}(K^*,b^*)) = 0.298066$, where $J_{\mu^*}$ is smaller as it ignores the dynamics of the conditional mean $\EE_0[X_t], \EE_0[u_t]$.

\begin{figure}[h]
\centering
\begin{minipage}{.5\textwidth}
  \centering
  \includegraphics[width=1.05\textwidth]{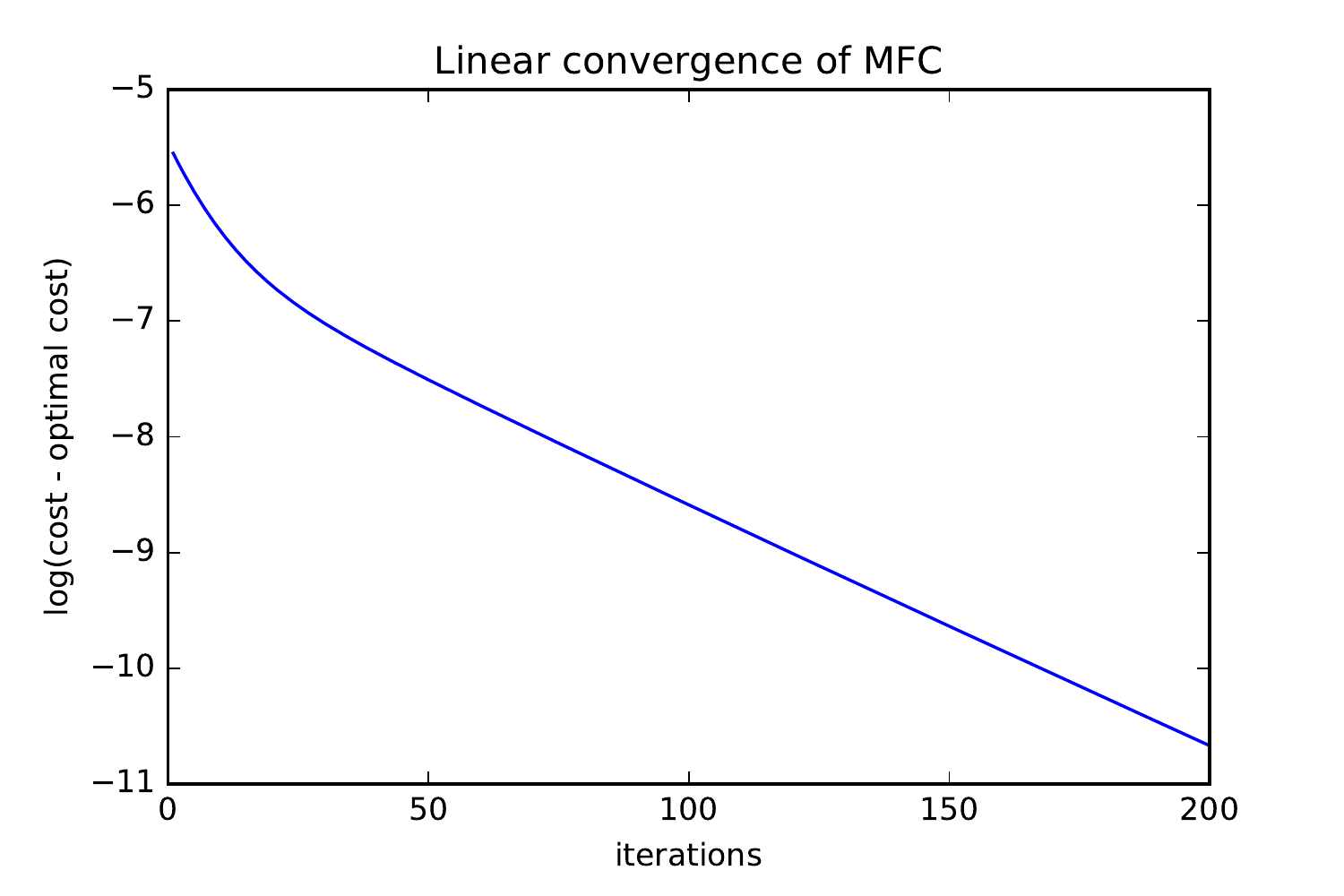}
\end{minipage}%
\begin{minipage}{.5\textwidth}
  \centering
  \includegraphics[width=1.05\textwidth]{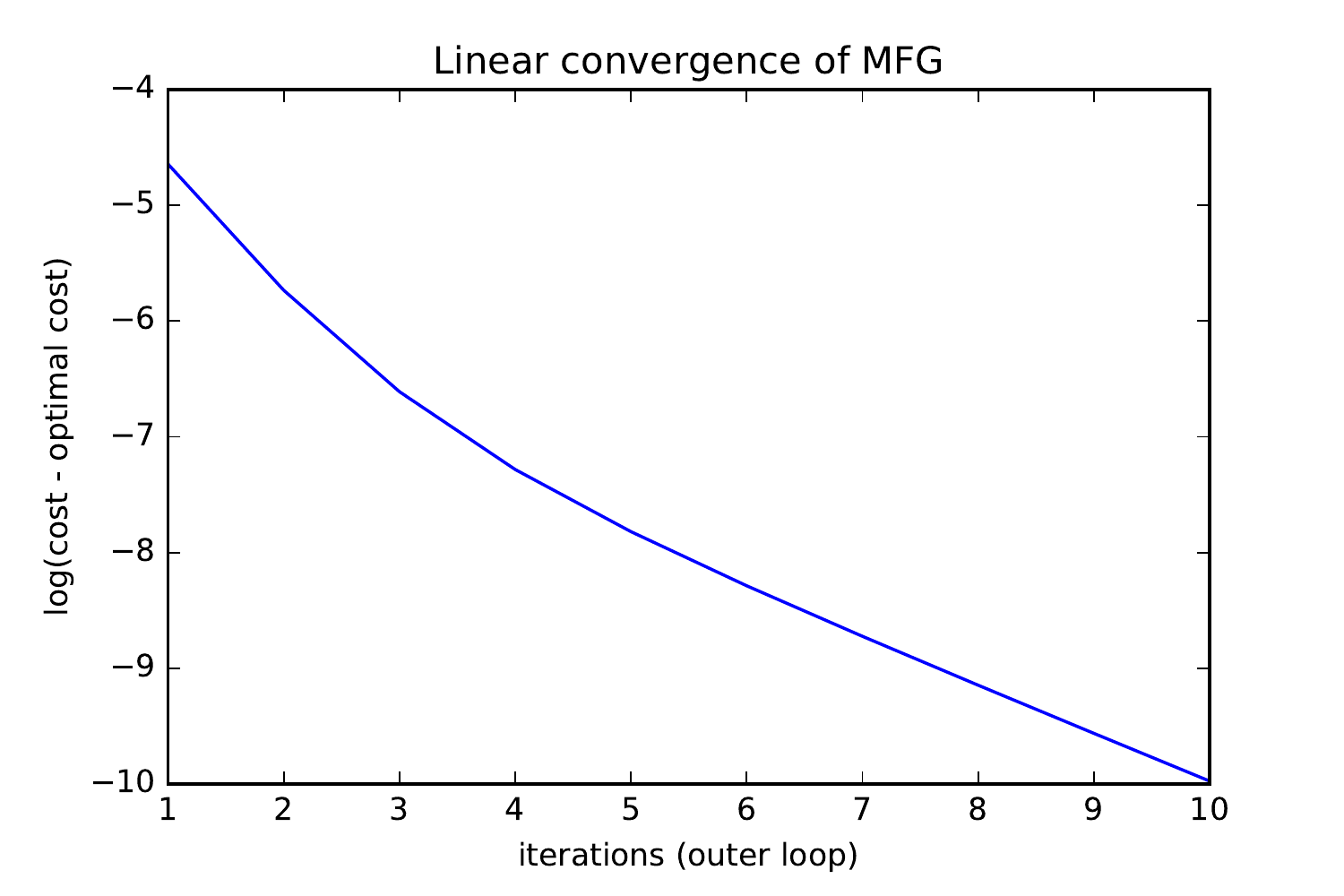}
\end{minipage}
        \caption{Linear Convergence of Policy Gradient for MFG and MFC. The figure on the left for MFC uses initial values $K = 0, L = 0$, learning rate $\eta = 0.01$ and plots $\log(J(K,L) - J(K^*,L^*))$ against the iterations $n = 1,2,\dots,200$. The figure on the right for MFG runs Algorithm \ref{alg1} with the initial values $K = 0, b = 0, \mu_x = 0.5(1,1,1)^\top, \mu_u = 0.5$, learning rate $\eta = 0.005$, the total number of iterations $S = 10$ for the outer loop, and for each $s=1,\dots,10$, the number of iterations $N_s = 20$ for the inner policy gradient updates. It plots $\log(J_{\mu_s}(K_{\pi_s}, b_{\pi_s}) - J_{\mu^*}(K^*,b^*))$ against $s = 1,2,\dots,10$.}
        \label{fig:fig1}
\end{figure}

\begin{figure}[h]
	\centering
      	\includegraphics[width=0.7\textwidth]{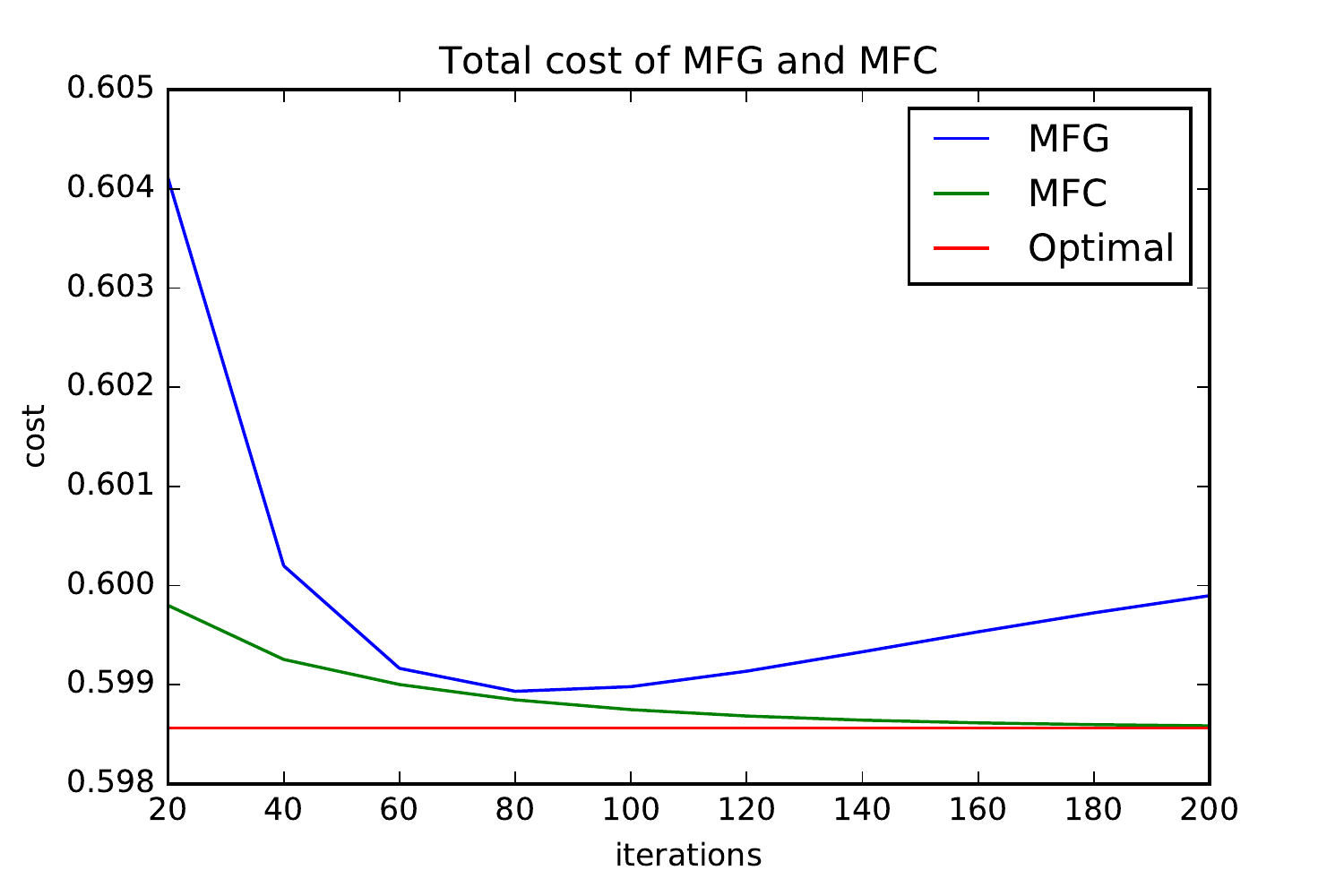}
        \caption{Total Cost of MFG and MFC. The cost of MFC (green curve) converges to the optimal level (red line) at a linear rate, while the cost of MFG (blue curve) fails to converge to the optimal level, although Nash equilibrium has been reached. For MFG, we get the cost every $N_s = 20$ inner policy gradient iterations.}
        \label{fig:fig2}
\end{figure}

Since MFG and MFC share the same model dynamics and cost function, we can  compare the cost they achieve together in Figure \ref{fig:fig2}. As the target of MFC is indeed minimizing the total cost, the effective control of policy gradient guarantees that the cost of MFC (green curve) converges to the optimal level at a linear rate. However, each agent of MFG only cares minimizing the cost with a given estimate of the mean-field state, i.e. solving the drifted LQR problem. Even when the estimate $\mu_s$ gets very close to the optimal $\mu^*$ and the Nash equilibrium is approximately obtained, the total cost of MFG (blue curve) is much larger than the optimal level. This is expected since in MFG, obviously agents have no control over the mean-field state and do not access $\bar Q, \bar R$ at all.

\section{Proofs for Section \ref{sec:LQR}}
\label{app:proofs_LQR}

\begin{proposition} (Proposition \ref{prop:policy_gradient}).
\#
\nabla_K J(K) = 2(RK - B^\top P_K) \Sigma_K = 2E_K \Sigma_K,
\#
where we define $E_K:= RK - B^\top P_K$.
\end{proposition}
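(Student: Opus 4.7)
My plan is to differentiate the identity $J(K) = \langle P_K, DD^\top\rangle$ with respect to $K$, and the main work is computing the derivative of $P_K$ using the Lyapunov equation that defines it.

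First, fix indices $i,j$ and let $K'=\partial K/\partial K_{ij}$ denote the corresponding basis matrix (all zeros except a single $1$). Let $P'$ denote $\partial P_K/\partial K_{ij}$. Differentiating the Lyapunov equation \eqref{eq:def_PK} with respect to $K_{ij}$ yields
\#
(A-BK)^\top P' + P'(A-BK) \;=\; -\bigl[K'^\top(RK - B^\top P_K) + (RK - B^\top P_K)^\top K'\bigr],
\#
where the right-hand side has been collected using $K'^\top R K + K^\top R K' - K'^\top B^\top P_K - P_K B K'$ and the definition $E_K := RK - B^\top P_K$. Since $A-BK$ is stable, this Lyapunov equation has the explicit solution
\#
P' \;=\; \int_0^\infty e^{(A-BK)^\top t}\bigl[K'^\top E_K + E_K^\top K'\bigr] e^{(A-BK)t}\, \ud t.
\#

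Next I differentiate the identity $J(K)=\langle P_K, DD^\top\rangle$ (equation \eqref{eq:JK_equiv}) to get $\partial J/\partial K_{ij}=\langle P', DD^\top\rangle$. Substituting the integral representation and moving $DD^\top$ inside the trace via cyclicity gives
\#
\frac{\partial J}{\partial K_{ij}} \;=\; \tr\!\left(\bigl[K'^\top E_K + E_K^\top K'\bigr]\int_0^\infty e^{(A-BK)t}DD^\top e^{(A-BK)^\top t}\,\ud t\right).
\#
The integral inside is precisely the solution of the Lyapunov equation \eqref{eq:invar_dist}, i.e.\ it equals $\Sigma_K$. Thus $\partial J/\partial K_{ij} = \tr(K'^\top E_K\Sigma_K) + \tr(E_K^\top K'\Sigma_K) = 2\tr(K'^\top E_K\Sigma_K) = 2(E_K\Sigma_K)_{ij}$, using symmetry of $\Sigma_K$ in the second trace. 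Assembling all entries gives $\nabla_K J(K)=2E_K\Sigma_K$.

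The main obstacle is making sure the differentiation of the Lyapunov equation for $P_K$ is clean and that the solution of the linearized Lyapunov equation has the integral representation. This relies on stability of $A-BK$, which is our standing assumption; it is what justifies interchanging $\partial/\partial K_{ij}$ with the integral and guarantees uniqueness of the solution to both Lyapunov equations. A minor bookkeeping issue is matching the symmetric form $K'^\top E_K + E_K^\top K'$ and collapsing the two trace terms using $\Sigma_K=\Sigma_K^\top$; this is routine but needs care.
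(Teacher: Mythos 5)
Your proof is correct and follows essentially the same route as the paper's: implicitly differentiate the Lyapunov equation \eqref{eq:def_PK} to get a Lyapunov equation for $\partial P_K$ with source $K'^\top E_K + E_K^\top K'$, then pair against $DD^\top$ so that $\Sigma_K$ emerges. The only cosmetic difference is that you realize this pairing via the explicit integral representation of the Lyapunov solution (Lemma \ref{lem:lyapunov}), whereas the paper takes traces of the two Lyapunov equations \eqref{eq:invar_dist} and \eqref{eq: pg_proof1} against each other's solutions and subtracts.
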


\begin{proof}
Rewrite the Lyapunov equation \eqref{eq:def_PK} as $\phi(K, P_K)=0$, where $\phi$ is a function of two independent arguments, defined as
\$
\phi(K, P_K):= (A-BK)^\top P_K + P_K (A-BK) + Q + K^\top RK.
\$
Taking differential on both sides, we have 
\$
0 & = \nabla_{K}\phi(K, P_K)\ud K + \nabla_{P_K}\phi(K, P_K)\ud P_K \\
  & = [(-B \ud K)^\top P_K  + P_K(-B\ud K) + (\ud K)^\top RK + K^\top R \ud K] + [(A-BK)^\top \ud P_K + \ud P_K (A-BK)],
\$
or equivalently, 
\# \label{eq: pg_proof1}
(A-BK)^\top \ud P_K + \ud P_K (A-BK) + (K^\top R - P_KB)\ud K + (dK)^\top (RK - B^\top P_K) = 0.
\#
Note that \eqref{eq:invar_dist}\eqref{eq: pg_proof1} have similar structures. We apply the trace operator to \eqref{eq:invar_dist} left multiplied by $\ud P_K$ and \eqref{eq: pg_proof1} left multiplied by $\Sigma_K$, and then take the difference to obtain
\$
\tr(\ud P_K DD^\top) 
& = \tr[\Sigma_K(K^\top R - P_KB)\ud K + \Sigma_K(dK)^\top (RK - B^\top P_K)] \\
& = \tr[2\Sigma_K(K^\top R - P_KB)\ud K].
\$
From \eqref{eq:JK_equiv}, by definition, we have
\$
\tr[(\nabla_K J(K))^\top \ud K] = \ud J(K) = \tr(\ud P_K DD^\top).
\$
Comparing the above two equations, we conclude $\nabla_K J(K) = 2(RK - B^\top P_K) \Sigma_K$.

\end{proof}

\begin{lemma}\label{lem:lyapunov}
(Solution of continuous Lyapunov equation). Suppose $W$ is stable. The solution $Y$ of continuous Lyapunov equation
\$
WY + YW^\top + Q = 0
\$
can be written as
\# \label{eq:lyapunov_soln}
Y = \int_0^\infty e^{W\tau } Qe^{W^\top \tau}\ud \tau. 
\#
\end{lemma}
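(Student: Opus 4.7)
The plan is to prove this by direct verification: exhibit the integral, show it converges, and check it solves the equation. Uniqueness would follow from a standard separate argument if needed.

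First, I would establish convergence of the integral $Y = \int_0^\infty e^{W\tau} Q e^{W^\top \tau} \ud \tau$. Since $W$ is stable (all eigenvalues have negative real part), standard matrix-exponential estimates give $\|e^{W\tau}\|_2 \le C e^{-\alpha \tau}$ for some constants $C > 0$ and $\alpha > 0$ depending only on the spectrum of $W$. Consequently $\|e^{W\tau} Q e^{W^\top \tau}\|_2 \le C^2 \|Q\|_2 e^{-2\alpha\tau}$, so the integrand is absolutely integrable and $Y$ is well-defined.

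Next I would verify the equation by differentiation under the integral sign. The key identity is
\#
\frac{\ud}{\ud\tau}\bigl( e^{W\tau} Q e^{W^\top \tau}\bigr) = W e^{W\tau} Q e^{W^\top \tau} + e^{W\tau} Q e^{W^\top \tau} W^\top,
\#
which holds because $W$ commutes with $e^{W\tau}$ (and similarly on the right). Summing the two matching pieces of $WY + YW^\top$ therefore telescopes:
\#
WY + YW^\top = \int_0^\infty \frac{\ud}{\ud\tau}\bigl( e^{W\tau} Q e^{W^\top \tau}\bigr)\ud\tau = \lim_{T\to \infty} e^{WT} Q e^{W^\top T} - Q.
\#
The limit at infinity vanishes by the same exponential bound as above, yielding $WY + YW^\top = -Q$, which is the Lyapunov equation.

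The argument is essentially a one-line telescoping calculation once convergence is in place, so there is no real obstacle. The only subtlety is keeping track of the two-sided action of $W$ and invoking stability twice (once to define $Y$, once to kill the boundary term). If the paper later needs uniqueness, I would add a short remark: any other solution $\tilde Y$ yields $Z = Y - \tilde Y$ satisfying $WZ + ZW^\top = 0$, and conjugating by $e^{W\tau}$ on both sides shows $e^{W\tau} Z e^{W^\top \tau}$ is constant in $\tau$; letting $\tau \to \infty$ forces $Z = 0$.
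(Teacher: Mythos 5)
Your proof is correct and complete: the exponential decay bound from stability justifies both the convergence of the integral and the vanishing of the boundary term, and the telescoping computation via $\frac{\ud}{\ud\tau}\bigl(e^{W\tau}Qe^{W^\top\tau}\bigr)$ is exactly the standard argument for this fact. The paper itself states this lemma without proof, treating it as a known result on continuous Lyapunov equations, so there is nothing to contrast with; your write-up (including the uniqueness remark via constancy of $e^{W\tau}Ze^{W^\top\tau}$) would serve as a valid self-contained proof if one were to be included.
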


In the following, given $K$ such that $A-BK$ is stable, we define two operators $\cT_K, \cF_K$ on symmetric matrix $X$ as

\$
& \cT_K(X) \coloneqq \int_0^\infty e^{(A-BK)\tau } X e^{(A-BK)^\top \tau}\ud \tau, \\
& \cF_K(X) \coloneqq (A-BK)X + X(A-BK)^\top.
\$
Then 
\$
\cF_K \circ \cT_K + I = 0,
\$
or 
\$
\cT_K = -\cF_K^{-1}.
\$
Additionally, from \eqref{eq:invar_dist} we have
\$
& \Sigma_K = \cT_K(DD^\top). \\
\$

\begin{lemma} (Perturbation of $P_K$). 
\label{lem:PK_diff}
Assume $K, K'$ are both stable. Then
\$
P_{K'} - P_K = \int_0^\infty e^{(A-BK')^\top \tau }[E_K^\top(K'-K) + (K'-K)^\top E_K + (K'-K)R(K'-K)] e^{(A-BK') \tau} \ud \tau.
\$
\end{lemma}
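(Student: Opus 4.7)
The plan is to derive a Lyapunov equation that the difference $P_{K'} - P_K$ satisfies, and then invoke Lemma \ref{lem:lyapunov} to write its solution in the integral form. Since both $K$ and $K'$ are stabilizing, the matrices $P_K$ and $P_{K'}$ are uniquely determined by the equations
$$(A-BK)^\top P_K + P_K(A-BK) + Q + K^\top R K = 0,$$
$$(A-BK')^\top P_{K'} + P_{K'}(A-BK') + Q + K'^\top R K' = 0.$$
First I would subtract the two and rewrite everything in terms of $P_{K'} - P_K$ and $K' - K$, by systematically adding and subtracting terms of the form $(A-BK')^\top P_K$ and $P_K (A-BK')$ so that $A-BK'$ becomes the ``dynamics matrix'' on the left side.

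Next I would handle the remaining lower-order terms. The drift difference produces $-(K'-K)^\top B^\top P_K - P_K B(K'-K)$, while expanding $K'^\top R K' - K^\top R K$ around $K$ gives the three pieces $K^\top R(K'-K) + (K'-K)^\top R K + (K'-K)^\top R (K'-K)$. Using that $R$ and $P_K$ are symmetric, the cross terms combine as
$$-P_K B(K'-K) + K^\top R (K'-K) = (RK - B^\top P_K)^\top (K'-K) = E_K^\top (K'-K),$$
and symmetrically on the other side. This yields
$$(A-BK')^\top (P_{K'}-P_K) + (P_{K'}-P_K)(A-BK') + E_K^\top (K'-K) + (K'-K)^\top E_K + (K'-K)^\top R (K'-K) = 0.$$

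Finally, since $A-BK'$ is stable by assumption and the inhomogeneous term in square brackets is a fixed symmetric matrix, I would apply Lemma \ref{lem:lyapunov} with $W = (A-BK')^\top$ to conclude the integral representation stated in the lemma. The main obstacle is purely bookkeeping: keeping track of transposes and cross terms carefully enough to see $E_K = RK - B^\top P_K$ (and its transpose) emerge cleanly from the combination of the drift perturbation and the linearized quadratic cost; once that recognition is made, the remainder is a direct appeal to the Lyapunov solution formula.
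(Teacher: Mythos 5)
Your proposal is correct and follows essentially the same route as the paper's own proof: subtract the two Lyapunov equations defining $P_K$ and $P_{K'}$, regroup so that $A-BK'$ is the dynamics matrix, recognize $E_K = RK - B^\top P_K$ in the cross terms, and invoke Lemma \ref{lem:lyapunov} for the integral representation. No gaps.
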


\begin{proof}
Taking the difference between two equations \eqref{eq:def_PK} corresponding to $K'$ and $K$, we have
\$
0 = 
&~ (A-BK')^\top P_{K'} + P_{K'}(A-BK')^\top - (A-BK'+B(K-K'))^\top P_{K} + P_{K}(A-BK'+B(K-K'))^\top \\
&~ + (K' - K + K)^\top R (K' - K + K) - K^\top R K \\
= 
&~ (A-BK')^\top(P_{K'} - P_K) + (P_{K'} - P_K)(A-BK')^\top - (K'-K)^\top B^\top P_K - P_K B(K' - K)\\
&~ + (K' - K + K)^\top R (K' - K + K) - K^\top R K \\
= 
&~ (A-BK')^\top(P_{K'} - P_K) + (P_{K'} - P_K)(A-BK')^\top \\
&~ + E_K^\top(K'-K) + (K'-K)^\top E_K + (K'-K)^\top R(K'-K).
\$
In other words, $P_{K'} - P_K$ is the solution of the continuous Lyapunov equation
\$
(A-BK')^\top Y + Y(A-BK') + E_K^\top(K'-K) + (K'-K)^\top E_K + (K'-K)^\top R(K'-K) = 0,
\$
in which $Y$ is the unknown matrix. Recalling Lemma \ref{lem:lyapunov}, we finish the proof.
\end{proof}

\begin{lemma} (Lemma \ref{lem:grad_domination}).
The cost function is gradient dominated \cite{karimi2016linear}, that is
\#
J(K) - J(K^*) \leq \frac{\|\Sigma_{K^*}\|}{\sigma_{\min}(R) \sigma^2_{\min}(DD^\top)} \tr(\nabla_K J(K)^\top \nabla_K J(K)).
\#
In additional, we have the following lower bound for $J(K) - J(K^*)$
\#
J(K) - J(K^*) \geq \frac{\sigma_{\min}(DD^\top)}{\|R\|} \tr(E_K^\top E_K).
\#
\end{lemma}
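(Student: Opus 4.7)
The plan is to apply the perturbation formula in Lemma \ref{lem:PK_diff} in two different ``directions'' and, in each case, complete the square so that a clean Loewner-order inequality on $P_K - P_{K^*}$ emerges; taking the trace against $DD^\top$ then converts these matrix inequalities into scalar cost comparisons via the adjoint identity $\tr\bigl(\int_0^\infty e^{(A-BK')^\top\tau}\, M\, e^{(A-BK')\tau}\,\ud\tau \cdot DD^\top\bigr) = \tr(M\,\Sigma_{K'})$, which follows by cycling the trace and recognizing $\Sigma_{K'} = \cT_{K'}(DD^\top)$.

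\textbf{Upper bound.} I would first apply Lemma \ref{lem:PK_diff} with ``new'' parameter $K^*$ and ``old'' parameter $K$, setting $X = K^* - K$. The bracket inside the integral is $E_K^\top X + X^\top E_K + X^\top R X$, which completes as $(X + R^{-1}E_K)^\top R (X + R^{-1}E_K) - E_K^\top R^{-1}E_K$, whose first summand is positive semidefinite. Hence $P_K - P_{K^*} \preceq \int_0^\infty e^{(A-BK^*)^\top\tau}\, E_K^\top R^{-1}E_K\, e^{(A-BK^*)\tau}\,\ud\tau$, and pairing with $DD^\top$ gives $J(K) - J(K^*) \leq \tr(E_K^\top R^{-1}E_K\,\Sigma_{K^*}) \leq \|\Sigma_{K^*}\|/\sigma_{\min}(R) \cdot \tr(E_K^\top E_K)$. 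To convert $\tr(E_K^\top E_K)$ into $\tr(\nabla_K J(K)^\top\nabla_K J(K))$, I would use $\nabla_K J(K) = 2\,E_K\Sigma_K$ from Proposition \ref{prop:policy_gradient}, giving $\tr(\nabla_K J(K)^\top \nabla_K J(K)) = 4\tr(\Sigma_K^2 E_K^\top E_K) \geq 4\sigma_{\min}^2(\Sigma_K)\,\tr(E_K^\top E_K)$, and then apply a lower bound of the form $\sigma_{\min}(\Sigma_K) \geq c\,\sigma_{\min}(DD^\top)$ extracted from the Lyapunov equation \eqref{eq:invar_dist}.

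\textbf{Lower bound.} I would introduce the one-step ``Newton'' policy $K'' = K - R^{-1}E_K$ and apply Lemma \ref{lem:PK_diff} with ``new'' parameter $K''$ and ``old'' parameter $K$. Plugging $Y = K'' - K = -R^{-1}E_K$ into the bracket, the three terms collapse: $E_K^\top Y + Y^\top E_K + Y^\top R Y = -2\,E_K^\top R^{-1}E_K + E_K^\top R^{-1}E_K = -E_K^\top R^{-1}E_K$, yielding the clean identity $P_K - P_{K''} = \int_0^\infty e^{(A-BK'')^\top\tau}\, E_K^\top R^{-1}E_K\, e^{(A-BK'')\tau}\,\ud\tau \succeq 0$. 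Since $K^*$ is the global minimizer of $J$, $J(K'') \geq J(K^*)$, whence $J(K) - J(K^*) \geq J(K) - J(K'') = \tr(E_K^\top R^{-1}E_K\,\Sigma_{K''}) \geq \sigma_{\min}(\Sigma_{K''})/\|R\| \cdot \tr(E_K^\top E_K)$, and one applies the same Lyapunov lower bound on $\sigma_{\min}(\Sigma_{K''})$ to finish. (One should also verify that $A-BK''$ is stable so that $\Sigma_{K''}$ is well defined, which amounts to a smallness condition on $E_K$.)

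\textbf{Main obstacle.} The completing-the-square bookkeeping is routine once Lemma \ref{lem:PK_diff} is in hand. The delicate step is the eigenvalue conversion: unlike the discrete-time identity $\Sigma_K = (A-BK)\Sigma_K(A-BK)^\top + DD^\top$ used in \cite{fazel2018global}, which immediately yields $\Sigma_K \succeq DD^\top$, the continuous-time Lyapunov equation \eqref{eq:invar_dist} gives no such direct bound, and a naive use of the integral representation is spoiled by the spectral behavior of $A-BK$. Establishing $\sigma_{\min}(\Sigma_K) \geq c\,\sigma_{\min}(DD^\top)$ uniformly over stabilizing $K$ (or absorbing the dependence into a problem-dependent constant) is the main technical obstacle, and is what introduces the extra factor of $\sigma_{\min}^2(DD^\top)$ in the denominator of the upper bound.
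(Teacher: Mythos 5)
Your proposal follows essentially the same route as the paper: apply Lemma \ref{lem:PK_diff}, complete the square in the bracket, pair with $DD^\top$ via $\tr(\cT_{K'}(M)\,DD^\top)=\tr(M\,\Sigma_{K'})$, take $K'=K^*$ for the upper bound and the Newton step $K'=K-R^{-1}E_K$ for the lower bound, and finish with $\nabla_K J(K)=2E_K\Sigma_K$. The one step you leave open --- a uniform lower bound $\sigma_{\min}(\Sigma_K)\geq c\,\sigma_{\min}(DD^\top)$ --- is exactly the step the paper dispatches by asserting $\Sigma_K\succeq DD^\top$, and your suspicion about it is well founded: that inequality is the discrete-time fact (where the $t=0$ term of the sum is $DD^\top$) and it fails in continuous time. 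For instance, with $d=1$, $A-BK=-\lambda$ and $DD^\top=\sigma^2$, the Lyapunov equation gives $\Sigma_K=\sigma^2/(2\lambda)<\sigma^2$ whenever $\lambda>1/2$; more generally one only gets $\sigma_{\min}(\Sigma_K)\geq \sigma_{\min}(DD^\top)/(2\|A-BK\|)$, so the constant would pick up a dependence on the policy (or on $\|A\|,\|B\|,\|K\|$). So your write-up matches the paper's argument everywhere the paper's argument is sound, and the ``main obstacle'' you identify is a genuine gap in the paper's own proof rather than in yours; to close it one must either restrict to the regime $\|A-BK\|\leq 1/2$ or accept a modified constant in the lemma. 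The same caveat applies to the lower bound (where the paper silently uses $\sigma_{\min}(\Sigma_{K'})\geq\sigma_{\min}(DD^\top)$ for the Newton iterate), and your remark that stability of $A-BK''$ needs checking is also a point the paper skips.
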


\begin{proof}
Based on \eqref{eq:JK_equiv} and Lemma \ref{lem:PK_diff}, we have
\$ 
&~J(K') - J(K) \\
= &~ \tr[(P_{K'} - P_K) DD^\top ] \\
= &~ \tr\left[\int_0^\infty e^{(A-BK')^\top \tau}[E_K^\top(K'-K) + (K'-K)^\top E_K + (K'-K)R(K'-K)] e^{(A-BK') \tau} DD^\top \ud \tau \right] \\
= &~ \tr\left[ \int_0^\infty e^{(A-BK') \tau} DD^\top  e^{(A-BK')^\top \tau} \ud \tau [E_K^\top(K'-K) + (K'-K)^\top E_K + (K'-K)^\top R(K'-K)] \right] \\
= &~ \tr[\Sigma_{K'} [E_K^\top(K'-K) + (K'-K)^\top E_K + (K'-K)^\top R(K'-K)]] \\
= &~ \tr[\Sigma_{K'} [(K' - K + R^{-1}E_K)^\top R (K' - K + R^{-1}E_K) - E_K^\top R^{-1}E_K]].
\$
On one hand, letting $K'= K^*$, we have
\$
J(K) - J(K^*)
& = \tr[\Sigma_{K^*} [E_K^\top R^{-1}E_K - (K^* - K + R^{-1}E_K)^\top R (K^* - K + R^{-1}E_K)]] \\
& \leq \tr[\Sigma_{K^*} E_K^\top R^{-1}E_K] \\
& \leq \frac{\|\Sigma_{K^*}\|}{\sigma_{\min}(R)} \tr(E_K^\top E_K) \\
& \leq \frac{\|\Sigma_{K^*}\|}{\sigma_{\min}(R) \sigma^2_{\min}(\Sigma_K)} \tr(\nabla_K J(K)^\top \nabla_K J(K)) \\
& \leq \frac{\|\Sigma_{K^*}\|}{\sigma_{\min}(R) \sigma^2_{\min}(DD^\top)} \tr(\nabla_K J(K)^\top \nabla_K J(K)).
\$
The last inequality follows from the fact that $\Sigma_{K} \succeq DD^\top \succeq \sigma_{\min}(DD^\top)\cdot I_d$.

On the other hand, letting $K' = K-R^{-1}E_K$, we have
\$
J(K) - J(K') = \tr[\Sigma_{K'}E_K^\top R^{-1}E_K].
\$
Then 
\$
J(K) - J(K^*) 
& \geq J(K) - J(K') \\
& \geq \tr[\Sigma_{K'} E_K^\top R^{-1}E_K] \\
& \geq \frac{\sigma_{\min}(DD^\top)}{\|R\|} \tr(E_K^\top E_K).
\$
\end{proof}

\begin{lemma}
\label{lem:Sigma_perturb}
(Perturbation analysis of $\Sigma_K$) Suppose $A-BK$ is stable and 
\$
\|K' - K\| \leq \frac{\sigma_{\min}(Q)\sigma_{\min}(DD^\top)}{4J(K) \| B \|},
\$
then $A-BK'$ is also stable and 
\$
\| \Sigma_{K'} - \Sigma_K \| \leq 4 \left(\frac{J(K)}{\sigma_{\min}(Q)}\right)^2\frac{\|B\|}{\sigma_{\min}(DD^\top)} \| K' - K \|.
\$
\end{lemma}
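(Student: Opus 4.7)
The approach converts the perturbation analysis into a self-consistent inequality for $\|\Sigma_{K'}\|$ via the same Lyapunov-operator manipulations used earlier in the appendix, and secures stability of $A - BK'$ by a continuity argument along the straight segment from $K$ to $K'$.

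\emph{Key identities.} Subtracting the two Lyapunov equations defining $\Sigma_K$ and $\Sigma_{K'}$, one obtains $\cF_{K'}(\Sigma_{K'} - \Sigma_K) = B(K'-K)\Sigma_K + \Sigma_K(K'-K)^\top B^\top$, so, provided $A - BK'$ is stable, $\Sigma_{K'} - \Sigma_K = \cT_{K'}\bigl(B(K'-K)\Sigma_K + \Sigma_K(K'-K)^\top B^\top\bigr)$. Two scalar bounds feed in. First, from $J(K) = \la Q + K^\top R K, \Sigma_K \ra \geq \la Q, \Sigma_K \ra \geq \sigma_{\min}(Q)\|\Sigma_K\|$, one gets $\|\Sigma_K\| \leq J(K)/\sigma_{\min}(Q)$. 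Second, because $DD^\top \succeq \sigma_{\min}(DD^\top) I$ and $-\|X\| I \preceq X \preceq \|X\| I$ for symmetric $X$, the positivity of $\cT_{K'}$ (as an integral of sandwich products $e^{(A-BK')\tau}(\cdot)e^{(A-BK')^\top\tau}$) gives $\|\cT_{K'}(X)\| \leq \|X\|\,\|\Sigma_{K'}\|/\sigma_{\min}(DD^\top)$.

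\emph{Self-consistent inequality.} Chaining these with the triangle inequality applied to the right-hand side of the identity produces $\|\Sigma_{K'} - \Sigma_K\| \leq 2\|B\|\,\|K'-K\|\,\|\Sigma_K\|\,\|\Sigma_{K'}\|/\sigma_{\min}(DD^\top)$. Substituting $\|\Sigma_K\| \leq J(K)/\sigma_{\min}(Q)$ together with the hypothesized bound on $\|K'-K\|$ makes the prefactor at most $1/2$, so $\|\Sigma_{K'}\| \leq \|\Sigma_K\| + \|\Sigma_{K'} - \Sigma_K\|$ yields $\|\Sigma_{K'}\| \leq 2 \|\Sigma_K\|$. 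Plugging this back into the displayed inequality and using the scalar bound on $\|\Sigma_K\|^2$ delivers the lemma with constant $4$.

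\emph{Stability of $A - BK'$ (the main obstacle).} The calculation above silently requires $\cT_{K'}$ to exist, that is, $A - BK'$ to be stable, and closing the logical loop between ``the perturbation stays bounded'' and ``the perturbed matrix is stable'' is the delicate step. I would run a continuity argument along the path $K_s = K + s(K'-K)$, $s \in [0,1]$. The set $S = \{ s \in [0,1] : A - BK_s \text{ is stable}\}$ is open (eigenvalues depend continuously on matrix entries, so stability is an open condition) and contains $0$. On the connected component $[0, s^\star)$ of $S$ containing $0$, the self-consistent bound above applies verbatim at each $s$ with $K$ fixed on the left-hand side, giving $\|\Sigma_{K_s}\| \leq 2 \|\Sigma_K\|$ uniformly in $s$. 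If $s^\star \leq 1$, then at the boundary the matrix $A - BK_{s^\star}$ would have an eigenvalue on the imaginary axis; combined with $DD^\top \succ 0$, this forces $\|\Sigma_{K_s}\| \to \infty$ as $s \uparrow s^\star$, contradicting the uniform bound. Hence $s^\star > 1$, so in particular $A - BK'$ is stable, and the perturbation estimate from the previous paragraph applies. The linear-algebraic computations are short; the substantive content of the lemma lies in this bootstrapping.
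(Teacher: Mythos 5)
Your proof is correct, and the core perturbation estimate is essentially the paper's argument in a different guise: the paper applies the resolvent identity $\cT_{K'}-\cT_K=\cT_K\circ(\cF_K-\cF_{K'})\circ\cT_{K'}$ to $DD^\top$, while you subtract the two Lyapunov equations directly to get $\cF_{K'}(\Sigma_{K'}-\Sigma_K)=B(K'-K)\Sigma_K+\Sigma_K(K'-K)^\top B^\top$ (up to a sign, since $\cT_{K'}=-\cF_{K'}^{-1}$, which is immaterial for the norm bound); both routes land on the same self-consistent inequality $\|\Sigma_{K'}-\Sigma_K\|\le 2\|B\|\,\|K'-K\|\,\|\Sigma_K\|\,\|\Sigma_{K'}\|/\sigma_{\min}(DD^\top)$, and both close it with the same absorption $\|\Sigma_{K'}\|\le 2\|\Sigma_K\|$ and the bound $\|\Sigma_K\|\le J(K)/\sigma_{\min}(Q)$, yielding the constant $4$. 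Where you genuinely depart from the paper is the stability claim: the paper disposes of it in one line by citing Lemma 10 of \cite{mohammadi2019convergence}, whereas you give a self-contained homotopy argument along $K_s=K+s(K'-K)$, using openness of the stable set, the uniform bound $\|\Sigma_{K_s}\|\le 2\|\Sigma_K\|$ on the component containing $0$, and the blow-up $v^*\Sigma_{K_s}v\ge\sigma_{\min}(DD^\top)/(-2\,\mathrm{Re}\,\lambda_s)\to\infty$ as an eigenvalue approaches the imaginary axis (this is where $DD^\top\succ 0$ is essential). This is exactly the content of the cited external lemma, so your version buys a fully self-contained proof at the cost of a few extra lines; you correctly identify that the bootstrapping between boundedness of $\Sigma_{K_s}$ and stability of $A-BK_s$ is the only nontrivial point the paper leaves implicit.
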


\begin{proof}
The first claim is easy to prove with Lemma 10 in \cite{mohammadi2019convergence}. The second claim is similar to Appendix C.4 in \cite{fazel2018global}. We first claim 
\#
\label{eq:bound_T_K}
\|\Sigma_K\| \leq \frac{J(K)}{\sigma_{\min}(Q)} \text{ and } \| \cT_K \| \leq \frac{\|\Sigma_K\|}{\sigma_{\min}(DD^\top)},
\#
and it is clear to see that
\$
\| \cF_{K'} - \cF_K\| \leq 2 \|B\| \|K' - K\|. 
\$
Then
\$
\| \cT_K \|\| \cF_{K'} - \cF_K\| \leq \frac{2 J(K) \|B\| \|K' - K\|}{\sigma_{\min}(Q)\sigma_{\min}(DD^\top)} \leq \frac12.
\$
Then we have
\$
\| \Sigma_{K'} - \Sigma_K \| & = \|(\cT_{K'} - \cT_K)(DD^\top) \| \le \| \cT_K \| \|\cF_{K'} - \cF_K\| \|\Sigma_{K'}\|  \\
& \leq \| \cT_K \| \|\cF_{K'} - \cF_K\| (\|\Sigma_K\| + \|\Sigma_{K'}-\Sigma_K\|)
\$
Therefore,
\$
\| \Sigma_{K'} - \Sigma_K \| & \leq 2\| \cT_K \| \|\cF_{K'} - \cF_K\| \|\Sigma_K\| \\
& \leq 4 \left(\frac{J(K)}{\sigma_{\min}(Q)}\right)^2\frac{\|B\|}{\sigma_{\min}(DD^\top)} \| K' - K \|.
\$
So it remains to show the claim in \eqref{eq:bound_T_K}. The first claim can be seen from
\$
J(K) = \tr(\Sigma_K(Q+K^\top R K)) \ge \tr(\Sigma_K) \sigma_{\min}(Q) \ge \|\Sigma_K\| \sigma_{\min}(Q)\,.
\$
The second claim can be shown from the following fact. For any unit vector $v \in \RR^d$ and unit spectral norm matrix $X$,
\$
v^\top \cT_K(X)  v &= \int_0^\infty \tr( X e^{(A-BK)^\top \tau} v v^\top e^{(A-BK)\tau }) \ud \tau \\
& \le \int_0^\infty \tr( DD^\top e^{(A-BK)^\top \tau} v v^\top e^{(A-BK)\tau }) \ud \tau \cdot \|(DD^\top)^{-1/2}X(DD^\top)^{-1/2}\| \\
& = (v^\top \Sigma_K v) \cdot \|(DD^\top)^{-1/2}X(DD^\top)^{-1/2}\| \le \|\Sigma_K\| \sigma_{\min}^{-1}(DD^\top)\,.
\$
We now complete the proof.
\end{proof}

\begin{lemma} (Estimate of one-step GD). 
\label{lem:LQR_onestep}
Suppose $K' = K - \eta \nabla_K J(K)$ with
\$
\eta \leq \min\left\{
\frac{3\sigma_{\min}(Q)}{8J(K) \| R\|}, \,
\frac{1}{16}\left(\frac{\sigma_{\min}(Q)\sigma_{\min}(DD^\top)}{J(K)}\right)^2 \frac{1}{\|B\| \| \nabla_K J(K) \| }
\right\},
\$
then
\$
J(K') - J(K^*) \leq \left(1-\eta\frac{\sigma_{\min}(R) \sigma^2_{\min}(DD^\top)}{\|\Sigma_{K^*}\|}\right) (J(K) - J(K^*)).
\$
\end{lemma}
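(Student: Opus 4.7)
The plan is to follow the classical ``gradient-dominated descent'' template, recycling the cost-difference identity derived inside the proof of Lemma~\ref{lem:grad_domination} and leveraging the perturbation bound in Lemma~\ref{lem:Sigma_perturb}. First, I would substitute the GD update $K'-K = -\eta\,\nabla_K J(K) = -2\eta\,E_K\Sigma_K$ (Proposition~\ref{prop:policy_gradient}) into that identity,
$$
J(K')-J(K) = \tr\bigl[\Sigma_{K'}\bigl\{(K'-K)^\top R(K'-K) + E_K^\top(K'-K) + (K'-K)^\top E_K\bigr\}\bigr].
$$
After expanding and using the symmetry of $\Sigma_K$ and $\Sigma_{K'}$ to merge the two cross-terms, this collapses to
$$
J(K')-J(K) = -4\eta\,\tr[E_K\Sigma_K\Sigma_{K'}E_K^\top] + 4\eta^2\,\tr[RE_K\Sigma_K\Sigma_{K'}\Sigma_K E_K^\top],
$$
a linear-minus-quadratic function of $\eta$ whose leading behavior, once $\Sigma_{K'}$ is replaced by $\Sigma_K$, is exactly $-4\eta\,\tr[E_K\Sigma_K^2 E_K^\top] = -\eta\,\tr[\nabla_K J(K)^\top\nabla_K J(K)]$.

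Second, I would invoke Lemma~\ref{lem:Sigma_perturb} to quantify the replacement $\Sigma_{K'}\approx\Sigma_K$. The second step-size bound in the hypothesis is tailored exactly so that $\|K'-K\|=\eta\,\|\nabla_K J(K)\|$ meets the smallness requirement of Lemma~\ref{lem:Sigma_perturb}, forcing $\|\Sigma_{K'}-\Sigma_K\|\le \tfrac14\sigma_{\min}(DD^\top)\le \tfrac14\sigma_{\min}(\Sigma_K)$ (using $\sigma_{\min}(\Sigma_K)\ge\sigma_{\min}(DD^\top)$ as invoked in the proof of Lemma~\ref{lem:grad_domination}). This yields the two-sided sandwich $\tfrac34\Sigma_K \preceq \Sigma_{K'} \preceq \tfrac54\Sigma_K$, and in particular $\|\Sigma_{K'}\|\le\tfrac54\|\Sigma_K\|$. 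The lower sandwich, combined with the Frobenius inner-product identity $\tr[E_K\Sigma_K\Sigma_{K'}E_K^\top]=\langle E_K\Sigma_K,\,E_K\Sigma_{K'}\rangle_F$ and the Cauchy--Schwarz estimate $\|E_K(\Sigma_{K'}-\Sigma_K)\|_F\le \|\Sigma_K^{-1}\|\,\|\Sigma_{K'}-\Sigma_K\|\,\|E_K\Sigma_K\|_F$, lower-bounds the linear term by $\tfrac34\,\tr[E_K\Sigma_K^2 E_K^\top]$. The upper sandwich, combined with the rewrite $\tr[RE_K\Sigma_K\Sigma_{K'}\Sigma_K E_K^\top]=\|R^{1/2}E_K\Sigma_K\Sigma_{K'}^{1/2}\|_F^2$ and submultiplicativity of the spectral norm, upper-bounds the quadratic term by $\|R\|\,\|\Sigma_{K'}\|\,\tr[E_K\Sigma_K^2 E_K^\top]$.

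Third, I would deploy the first step-size bound $\eta \le 3\sigma_{\min}(Q)/(8J(K)\|R\|)$. Combined with $\|\Sigma_K\|\le J(K)/\sigma_{\min}(Q)$ (derived as \eqref{eq:bound_T_K} inside the proof of Lemma~\ref{lem:Sigma_perturb}), this gives $\eta\,\|R\|\,\|\Sigma_K\|\le 3/8$, which makes the quadratic term strictly dominated by the linear one. The net outcome is a one-step descent of the form $J(K')-J(K) \le -c\,\eta\,\tr[\nabla_K J(K)^\top\nabla_K J(K)]$ for a universal constant $c>0$ (tracking the $\tfrac34$, $\tfrac54$, and $\tfrac38$ factors gives $c=9/32$). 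Plugging in the upper-bound half of Lemma~\ref{lem:grad_domination}, $\tr[\nabla_K J(K)^\top\nabla_K J(K)] \ge (\sigma_{\min}(R)\sigma_{\min}^2(DD^\top)/\|\Sigma_{K^*}\|)(J(K)-J(K^*))$, and subtracting $J(K^*)$ from both sides delivers the claimed contraction, with the constant $c$ absorbable into the polynomial-in-problem-parameters step-size choice used at the Theorem~\ref{thm:gd1} level.

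The main obstacle I anticipate is the trace manipulations in the second step. Because the mixed product $\Sigma_K\Sigma_{K'}$ is not symmetric, the clean operator-inequality shortcut ``$A\succeq cB \Rightarrow XAX^\top \succeq c\,XBX^\top$'' cannot be applied directly to $\tr[E_K\Sigma_K\Sigma_{K'}E_K^\top]$; one must instead pass through the Frobenius inner product and control the cross term $\langle E_K\Sigma_K,\,E_K(\Sigma_{K'}-\Sigma_K)\rangle_F$ by Cauchy--Schwarz. Likewise, recognizing $\tr[RE_K\Sigma_K\Sigma_{K'}\Sigma_K E_K^\top]$ as the squared Frobenius norm $\|R^{1/2}E_K\Sigma_K\Sigma_{K'}^{1/2}\|_F^2$ is what makes the spectral-norm upper bound work, and sidesteps the subtlety that the trace of a product of three symmetric PSD matrices need not be nonnegative in general dimensions. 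Once these two bounds are in place with explicit constants, the remaining algebra is a routine descent calculation.
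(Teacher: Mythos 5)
Your proposal is correct and follows essentially the same route as the paper's proof: both substitute the gradient step into the cost-difference identity from the proof of Lemma \ref{lem:grad_domination}, control $\Sigma_{K'}$ via Lemma \ref{lem:Sigma_perturb} using the two step-size conditions, and finish with gradient domination. The only difference is bookkeeping: you track the constants carefully and land on $c=9/32$ in front of $\eta$ (using the sandwich $\tfrac34\Sigma_K\preceq\Sigma_{K'}\preceq\tfrac54\Sigma_K$ rather than the paper's bound $\|\Sigma_{K'}\|\le 4J(K)/(3\sigma_{\min}(Q))$), whereas the paper reports the coefficient as exactly $1$ --- its own chain, read literally, yields $1/4$ once one accounts for $\tr(\nabla_K J(K)^\top\nabla_K J(K))=4\tr(\Sigma_K E_K^\top E_K\Sigma_K)$ --- and, as you note, such a constant is harmlessly absorbed into the step-size choice in Theorem \ref{thm:gd1}.
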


\begin{proof}
By the proof of Lemma \ref{lem:grad_domination}, we have
\$
&~J(K) - J(K') \\
= &~ 2\tr[\Sigma_{K'}(K-K')^\top E_K] -\tr[\Sigma_{K'}(K-K')^\top R(K-K')] \\
= &~ 4\eta\tr(\Sigma_{K'}\Sigma_K E_K^\top E_K) -
4\eta^2\tr(\Sigma_K \Sigma_{K'} \Sigma_K E_K^\top R E_K) \\
\geq &~ 4\eta\tr(\Sigma_{K} E_K^\top E_K \Sigma_K) - 4\eta \| \Sigma_{K'} - \Sigma_K\| \tr(\Sigma_K E_K^\top E_K) - 4\eta^2\| \Sigma_{K'}\| \|R\| \tr(\Sigma_K E_K^\top E_K \Sigma_K) \\
\geq &~ 4\eta\tr(\Sigma_{K} E_K^\top E_K \Sigma_K) - 4\eta \frac{\| \Sigma_{K'} - \Sigma_K\|}{\sigma_{\min}(\Sigma_K)} \tr(\Sigma_K E_K^\top E_K \Sigma_K) - 4\eta^2 \| \Sigma_{K'}\| \|R\| \tr(\Sigma_K  E_K^\top E_K \Sigma_K) \\
= &~ 4\eta\left( 1 - \frac{\| \Sigma_{K'} - \Sigma_K\|}{\sigma_{\min}(\Sigma_K)} - \eta \| \Sigma_{K'}\| \|R\| \right) \tr(\nabla_K J(K)^\top \nabla_K J(K)) \\
\geq &~ 4\eta \frac{\sigma_{\min}(R) \sigma^2_{\min}(DD^\top)}{\|\Sigma_{K^*}\|} 
\left( 1 - \frac{\| \Sigma_{K'} - \Sigma_K\|}{\sigma_{\min}(DD^\top)} - \eta \| \Sigma_{K'}\| \|R\| \right) (J(K) - J(K^*)).
\$
The condition on $\eta$ ensures
\$
\|K' - K\| \leq \frac{\sigma_{\min}(Q)\sigma_{\min}(DD^\top)}{4J(K) \| B \|},
\$
so by Lemma \ref{lem:Sigma_perturb},
\$
\frac{\| \Sigma_{K'} - \Sigma_K\|}{\sigma_{\min}(DD^\top)}  \leq 
4\eta \left(\frac{J(K)}{\sigma_{\min}(Q)\sigma_{\min}(DD^\top)}\right)^2 \|B\| \| \nabla_K J(K) \| \leq \frac14,
\$
with the assumed $\eta$.
Then
\$
\| \Sigma_{K'}\| \leq \| \Sigma_{K}\| + \| \Sigma_{K'} - \Sigma_K\| 
\leq \frac{J(K)}{\sigma_{\min}(Q)} + \frac{\sigma_{\min}(DD^\top)}{4}
\leq \frac{J(K)}{\sigma_{\min}(Q)} + \frac{\| \Sigma_{K'}\|}{4},
\$
which implies $\| \Sigma_{K'}\| \leq \frac{4J(K)}{3\sigma_{\min}(Q)}$. Hence,
\$
1 - \frac{\| \Sigma_{K'} - \Sigma_K\|}{\sigma_{\min}(DD^\top)} - \eta \| \Sigma_{K'}\| \|R\|
\geq 1 - \frac14 - \eta\frac{4J(K)\| R \|}{3\sigma_{\min}(Q)} \geq \frac14,
\$
with the assumed $\eta$.
Now we have
\$
J(K) - J(K') \geq \eta\frac{\sigma_{\min}(R) \sigma^2_{\min}(DD^\top)}{\|\Sigma_{K^*}\|} (J(K) - J(K^*)), 
\$
which is equivalent to the desired conclusion.
\end{proof}

\begin{theorem} (Theorem \ref{thm:gd1}).
With an appropriate constant setting of the stepsize $\eta$ in the form of
\$
\eta = \text{poly}\left(\frac{\sigma_{\min}(Q)}{C(K_0)}, \sigma_{\min}(DD^\top), \frac{1}{\| B\|}, \frac{1}{\| R\|}\right),  
\$
and number of iterations
\$
N \geq \frac{\| \Sigma_{K^*} \|}{\eta\sigma_{\min}^2(DD^\top) \sigma_{\min}(R)}\log\frac{J(K_0)-J(K^*)}{\varepsilon},
\$
the iterates of gradient descent enjoys
\$
J(K_N) - J(K^*) \leq \varepsilon.
\$ 
\end{theorem}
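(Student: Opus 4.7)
The plan is to prove the theorem by iterating the one-step progress bound of Lemma \ref{lem:LQR_onestep}. That lemma shows that a single gradient step with a sufficiently small stepsize $\eta$ yields the geometric contraction
\$
J(K') - J(K^*) \leq \Bigl(1-\eta\,\alpha\Bigr)(J(K)-J(K^*)), \qquad \alpha := \frac{\sigma_{\min}(R)\,\sigma^2_{\min}(DD^\top)}{\|\Sigma_{K^*}\|}.
\$
If I can guarantee that the stepsize condition of Lemma \ref{lem:LQR_onestep} is satisfied at every iterate $K_n$ with a \emph{single, iteration-independent} $\eta$, then $N$ applications give
\$
J(K_N)-J(K^*) \leq (1-\eta\alpha)^N (J(K_0)-J(K^*)) \leq e^{-\eta\alpha N}(J(K_0)-J(K^*)),
\$
and solving $e^{-\eta\alpha N}(J(K_0)-J(K^*)) \leq \varepsilon$ for $N$ reproduces the iteration count claimed in the statement.

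The main obstacle is therefore the \emph{uniform verification} of the stepsize condition across all iterations, since the bound in Lemma \ref{lem:LQR_onestep} depends on $J(K_n)$ and $\|\nabla_K J(K_n)\|$, both of which evolve as $n$ grows. I would resolve this by a monotonicity argument: Lemma \ref{lem:LQR_onestep} simultaneously yields $J(K_{n+1}) \leq J(K_n)$, so by induction
\$
J(K_n) \leq J(K_0) =: C(K_0) \quad \text{for all } n \geq 0,
\$
provided the stepsize condition holds through step $n$. Then the quantity $J(K_n)$ in Lemma \ref{lem:LQR_onestep} can be replaced uniformly by $C(K_0)$. For the gradient norm, since $J(K_n) - J(K^*) \leq J(K_0)$, the lower bound in Lemma \ref{lem:grad_domination} (via the bound $J(K)-J(K^*) \geq \sigma_{\min}(DD^\top)\|R\|^{-1}\tr(E_K^\top E_K)$ combined with $\nabla_K J(K) = 2E_K\Sigma_K$ and $\|\Sigma_K\| \leq J(K)/\sigma_{\min}(Q)$ from \eqref{eq:bound_T_K}) yields a uniform upper bound on $\|\nabla_K J(K_n)\|$ in terms of $C(K_0)$ and problem constants. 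Plugging these uniform bounds into both requirements on $\eta$ in Lemma \ref{lem:LQR_onestep} gives a single admissible stepsize of the polynomial form stated in the theorem, namely
\$
\eta = \text{poly}\Bigl(\tfrac{\sigma_{\min}(Q)}{C(K_0)},\; \sigma_{\min}(DD^\top),\; \|B\|^{-1},\; \|R\|^{-1}\Bigr).
\$

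I would organize the formal argument as a simple induction on $n$. \textbf{Base case:} the stepsize is admissible at $K_0$ by construction. \textbf{Inductive step:} assuming the contraction has held through step $n$, we have $J(K_n) \leq J(K_0)$ and the corresponding gradient bound, hence the stepsize condition of Lemma \ref{lem:LQR_onestep} is again met at $K_n$, yielding the next contraction and in particular $J(K_{n+1}) \leq J(K_n)$. This carries the induction forward and produces the geometric decay claimed above. Combining with the elementary inequality $(1-\eta\alpha)^N \leq e^{-\eta\alpha N}$ and solving for $N$ completes the proof; the remark that $\eta$ does not depend on $\|A\|$ and $\sigma_{\min}(R)^{-1}$, unlike in the discrete-time result of Fazel et al., follows by direct inspection of the stepsize formula, as neither quantity appears in the conditions of Lemma \ref{lem:LQR_onestep}.
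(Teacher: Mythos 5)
Your proposal is correct and follows essentially the same route as the paper: iterate the one-step contraction of Lemma \ref{lem:LQR_onestep} to get $(1-\eta\alpha)^N$ decay and solve for $N$. You are in fact somewhat more careful than the paper's own (very terse) proof, since you explicitly justify by induction that a single iteration-independent stepsize remains admissible along the trajectory via the monotone decrease of $J(K_n)$ and the uniform gradient bound; the paper leaves this step implicit.
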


\begin{proof}
Iterating the gradient decent for $N$ times, from Lemma \ref{lem:LQR_onestep}, we know
\$
J(K_N) - J(K^*) \leq \left(1-\eta\frac{\sigma_{\min}(R) \sigma^2_{\min}(DD^\top)}{\|\Sigma_{K^*}\|}\right)^N (J(K_0) - J(K^*)).
\$
Therefore, if $N$ is chosen as the above, we can make the right hand side smaller than $\varepsilon$.
\end{proof}

\section{Proofs for Section \ref{sec:DLQR}}
\label{app:proofs_DLQR}

\begin{proposition} (Proposition \ref{prop:J2}).
The optimal intercept $b^K$ to minimize $J_2(K,b)$ for any given $K$ is that
\#\label{eq:b_opt}
b^K = -(KQ^{-1}A^\top + R^{-1}B^\top) (AQ^{-1}A^\top + BR^{-1}B^\top)^{-1} a
\#
Furthermore, $J_2(K,b^K)$ takes the form of 
\#\label{eq:J2_opt}
J_2(K,b^K) = a^\top (AQ^{-1}A^\top + BR^{-1}B^\top)^{-1} a
\#
which is independent of $K$.
\end{proposition}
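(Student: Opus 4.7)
The plan is to recognize $J_2(K,b)$ as a constrained quadratic optimization whose optimal value is intrinsically $K$-free, and then solve it via a standard Lagrangian.

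First, I would simplify the quadratic form defining $J_2$. Observe the factorization
$$
\begin{pmatrix} Q + K^\top R K & -K^\top R \\ -RK & R \end{pmatrix}
= \begin{pmatrix} Q & 0 \\ 0 & 0 \end{pmatrix}
+ \begin{pmatrix} K^\top \\ -I \end{pmatrix} R \begin{pmatrix} K & -I \end{pmatrix}.
$$
Introducing $u_{K,b} := -K \mu_{K,b} + b$, i.e.\ the mean action at the invariant state, this immediately gives
$$
J_2(K,b) = \mu_{K,b}^\top Q\, \mu_{K,b} + u_{K,b}^\top R\, u_{K,b}.
$$

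Next, I would identify the constraint linking $\mu_{K,b}$ and $u_{K,b}$. The definition $\mu_{K,b} = -(A-BK)^{-1}(a+Bb)$ rearranges to $(A-BK)\mu_{K,b} + a + Bb = 0$, which is exactly $A\mu_{K,b} + B u_{K,b} = -a$ (the drift vanishes at the invariant mean). Conversely, given any $(\mu,u)$ with $A\mu + Bu = -a$, setting $b = u + K\mu$ recovers $\mu_{K,b}=\mu$ and $u_{K,b}=u$. Hence as $b$ ranges over $\mathbb{R}^k$, the pair $(\mu_{K,b}, u_{K,b})$ ranges bijectively over the affine set $\{(\mu,u) : A\mu + Bu = -a\}$, which does not depend on $K$. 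Therefore
$$
\min_b J_2(K,b) \;=\; \min_{A\mu + Bu = -a}\; \mu^\top Q \mu + u^\top R u,
$$
and the right-hand side is manifestly independent of $K$, which already proves the second claim qualitatively.

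I would then carry out the constrained minimization by Lagrange multipliers. With Lagrangian $\mathcal{L}(\mu,u,\lambda) = \mu^\top Q \mu + u^\top R u + 2\lambda^\top(A\mu + Bu + a)$, the stationarity conditions read $\mu^* = -Q^{-1}A^\top \lambda$ and $u^* = -R^{-1}B^\top \lambda$. Substituting into the constraint yields $(AQ^{-1}A^\top + BR^{-1}B^\top)\lambda = a$, so $\lambda = (AQ^{-1}A^\top + BR^{-1}B^\top)^{-1} a$. The minimum value is $(\mu^*)^\top Q \mu^* + (u^*)^\top R u^* = \lambda^\top(AQ^{-1}A^\top + BR^{-1}B^\top)\lambda = \lambda^\top a$, which is exactly \eqref{eq:J2_opt}. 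Finally, unwinding $b = u + K\mu$ gives
$$
b^K = u^* + K\mu^* = -\bigl(R^{-1}B^\top + K Q^{-1}A^\top\bigr)\lambda,
$$
recovering \eqref{eq:b_opt}.

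The argument is essentially mechanical once the factorization in Step 1 is spotted; the only genuine step is recognizing that the $b$-parametrization and the constraint $A\mu + Bu = -a$ describe the same affine family. The tacit requirement is invertibility of $AQ^{-1}A^\top + BR^{-1}B^\top$, which holds iff the rows of $[A\ B]$ span $\mathbb{R}^d$ (a mild controllability-type condition built into the problem setup and used implicitly in the statement). I do not anticipate any other obstacle.
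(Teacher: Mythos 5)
Your proof is correct, and while it shares the same skeleton as the paper's (recast $\min_b J_2(K,b)$ as a quadratic minimization over the affine set where the invariant drift vanishes, then apply Lagrange multipliers), your route through the change of variables $u = -K\mu + b$ is genuinely different in its key step and arguably cleaner. The paper works directly in the coordinates $(\mu, b)$, sets up the Lagrangian with the full block matrix $M = \begin{pmatrix} Q + K^\top R K & -K^\top R \\ -RK & R \end{pmatrix}$ and constraint matrix $N = \begin{pmatrix}(A-BK)^\top \\ B^\top\end{pmatrix}$, and obtains the answer from the general formula $a^\top (N^\top M^{-1} N)^{-1} a$; the $K$-independence then emerges only after explicitly inverting $M$ and verifying the cancellation $N^\top M^{-1} N = AQ^{-1}A^\top + BR^{-1}B^\top$. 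Your factorization of the cost matrix turns the objective into $\mu^\top Q \mu + u^\top R u$ and the constraint into $A\mu + Bu = -a$, so the $K$-independence of $\min_b J_2(K,b)$ is structural and visible before any computation — in effect you perform the paper's matrix cancellation at the level of coordinates rather than of block inverses. The one point worth stating explicitly (which you handle correctly) is that $b \mapsto (\mu_{K,b}, u_{K,b})$ is a bijection onto the feasible set, since that is what licenses replacing the minimization over $b$ by the minimization over the affine set; your verification that $b = u + K\mu$ inverts the map settles this. Both proofs tacitly require $AQ^{-1}A^\top + BR^{-1}B^\top \succ 0$ and the invertibility of $A - BK$ (guaranteed by stability), so there is no gap.
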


\begin{proof}
The problem of $\min_b J_2(K,b)$ is equivalent to the following constrained optimization
\#\label{eq:contrained_optimization_b}
& \min \begin{pmatrix}
\mu \\
b
\end{pmatrix}^\top 
\begin{pmatrix}
Q + K^\top R K & -K^\top R \\
-RK & R
\end{pmatrix}
 \begin{pmatrix}
\mu \\
b
\end{pmatrix} \notag \\
& \text{s.t.} \;\;  (A-BK) \mu + (a+Bb) = 0
\#
Using the Lagrangian multiplier method, we have
$$
2 M \begin{pmatrix}
\mu \\
b
\end{pmatrix} + N \lambda = 0\,, \quad\quad
N^\top \begin{pmatrix}
\mu \\
b
\end{pmatrix} + a = 0\,,
$$ 
where 
$$
M = \begin{pmatrix}
Q + K^\top R K & -K^\top R \\
-RK & R
\end{pmatrix} \,, \quad\quad
 N = \begin{pmatrix}
(A-BK)^\top \\
B^\top
\end{pmatrix}\,.
$$
Therefore, it is not hard to derive the optimal $(\mu^K, b^K)$ is 
$$
\begin{pmatrix}
\mu^K \\
b^K
\end{pmatrix}  = -M^{-1} N(N^\top M^{-1}N)^{-1} a\,.
$$
And the optimal value of $J_2(K,b)$ is $J_2(K,b^K) = a^\top (N^\top M^{-1}N)^{-1} a$. By some simple calculation, 
$$
M^{-1} = \begin{pmatrix}
Q^{-1} & Q^{-1} K^\top \\
KQ^{-1} & KQ^{-1}K^\top + R^{-1}
\end{pmatrix}\,,
$$
and $N^\top M^{-1} N = AQ^{-1}A^\top + BR^{-1}B^\top$. Therefore, the final optimal $$
\begin{pmatrix}
\mu^K \\
b^K
\end{pmatrix}  = - \begin{pmatrix}
Q^{-1}A^\top \\
KQ^{-1}A^\top + R^{-1}B^\top
\end{pmatrix} (AQ^{-1}A^\top + BR^{-1}B^\top)^{-1} a\,.
$$
\end{proof}

\begin{theorem} (Theorem \ref{thm:gd1_intercept}).
With the stepsize $\eta$ in the form of
\$
\eta = \text{poly}\left(\frac{\sigma_{\min}(Q)}{C(K_0)}, \sigma_{\min}(DD^\top), \frac{1}{\| B\|}, \frac{1}{\| R\|}\right),  
\$
and number of iterations
\$
N \geq \frac{\| \Sigma_{K^*} \|}{\eta\sigma_{\min}^2(DD^\top) \sigma_{\min}(R)}\log\frac{J_1(K_0)-J_1(K^*)}{\varepsilon},
\$
the iterates of gradient descent enjoys $J_1(K_N) - J_1(K^*) \leq \varepsilon$. If we follow $b^K = -(KQ^{-1}A^\top + R^{-1}B^\top) (AQ^{-1}A^\top + BR^{-1}B^\top)^{-1} a$, we have 
\$
J(K_N, b^{K_N}) - J(K^*, b^*) \leq \varepsilon.
\$ 
Furthermore, 
\#
\label{eq:bound_K_b}
\|K_N - K^*\|_F \le \sigma_{\min}^{-1/2}(R)\sigma_{\min}^{-1/2}(DD^\top) \sqrt{\varepsilon}, \quad \|b^{K_N} - b^*\|_2 \le C_b(a) \sigma_{\min}^{-1/2}(R)\sigma_{\min}^{-1/2}(DD^\top) \sqrt{\varepsilon} \,,
\#
where $C_b(a) = \|Q^{-1}A^\top(AQ^{-1}A^\top+BR^{-1}B^\top)^{-1}a\|_2$ is a constant depending on the intercept $a$. 
\end{theorem}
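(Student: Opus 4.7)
The plan is to reduce the drifted LQR convergence theorem to the regular LQR convergence theorem (Theorem \ref{thm:gd1}), exploiting the key fact established in Proposition \ref{prop:J2} that $J_2(K, b^K)$ is independent of $K$. Concretely, substituting $b = b^K$ in the decomposition $J(K,b) = J_1(K) + J_2(K,b)$ gives
\$
J(K, b^K) - J(K^*, b^*) = J_1(K) - J_1(K^*),
\$
because the $J_2$ contributions cancel. Thus tracking $J(K_N, b^{K_N}) - J(K^*, b^*)$ is identical to tracking $J_1(K_N) - J_1(K^*)$, and since the gradient direction $2(RK - B^\top P_K)\Sigma_K$ coincides with $\nabla_K J_1(K)$ under the fixed-$b^K$ substitution (as noted in the main text), running gradient descent on $K$ in the drifted problem is exactly running gradient descent on $J_1$ in the regular LQR. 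Invoking Theorem \ref{thm:gd1} for $J_1$ with the stated $\eta$ and $N$ then immediately yields $J_1(K_N) - J_1(K^*) \leq \varepsilon$, hence $J(K_N, b^{K_N}) - J(K^*, b^*) \leq \varepsilon$.

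For the parameter bounds, I would first derive a quadratic lower bound on $J_1(K) - J_1(K^*)$ in terms of $\|K - K^*\|_F$. Reusing the identity from the proof of Lemma \ref{lem:grad_domination} but with the roles of $K$ and $K^*$ swapped and exploiting that $E_{K^*} = RK^* - B^\top P^* = 0$, one obtains
\$
J_1(K) - J_1(K^*) = \tr\bigl[\Sigma_K (K - K^*)^\top R (K - K^*)\bigr] \geq \sigma_{\min}(R) \sigma_{\min}(DD^\top) \|K - K^*\|_F^2,
\$
where the final inequality uses $\Sigma_K \succeq DD^\top \succeq \sigma_{\min}(DD^\top) I$. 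Rearranging and combining with $J_1(K_N) - J_1(K^*) \leq \varepsilon$ gives the Frobenius bound on $K_N - K^*$.

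For the intercept bound, I would use the explicit formula for $b^K$ from \eqref{eq:b_opt}. Since only the $K$-dependent summand $KQ^{-1}A^\top$ differs between $b^{K_N}$ and $b^*$, subtraction yields
\$
b^{K_N} - b^* = -(K_N - K^*) Q^{-1} A^\top (AQ^{-1}A^\top + BR^{-1}B^\top)^{-1} a,
\$
so applying $\|\cdot\|_2$, using $\|Mv\|_2 \leq \|M\|_F \|v\|_2$ for the $K_N - K^*$ factor, and substituting the definition of $C_b(a)$ together with the $K_N - K^*$ bound just established delivers the claim.

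No single step is really the main obstacle; the subtlety is conceptual rather than technical, namely recognizing the $K$-independence of $J_2(K, b^K)$ to collapse the problem to pure LQR. The one place requiring a bit of care is verifying the quadratic lower bound $J_1(K) - J_1(K^*) \gtrsim \|K - K^*\|_F^2$, which does not appear explicitly as a lemma above but follows from rerunning the identity used to prove the gradient domination lemma in reversed roles; this is the step I would write out most carefully to make sure the $E_{K^*} = 0$ cancellation is clean.
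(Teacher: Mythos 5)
Your proposal matches the paper's own proof essentially step for step: the convergence claim is reduced to Theorem \ref{thm:gd1} via the $K$-independence of $J_2(K,b^K)$ from Proposition \ref{prop:J2}; the bound on $\|K_N-K^*\|_F$ comes from the identity $J_1(K)-J_1(K^*)=\tr[\Sigma_K(K-K^*)^\top R(K-K^*)]$ obtained by swapping roles in the Lemma \ref{lem:grad_domination} computation and using $E_{K^*}=0$, together with $\Sigma_K\succeq \sigma_{\min}(DD^\top)I$; and the bound on $\|b^{K_N}-b^*\|_2$ follows from the explicit linear dependence of $b^K$ on $K$ in \eqref{eq:b_opt}. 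This is the same argument as in Appendix \ref{app:proofs_DLQR}.
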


\begin{proof}
We only need to show the bound for $K_N$ and $b^{K_N}$ in \eqref{eq:bound_K_b}. From the proof of Lemma \ref{lem:grad_domination}, we showed that for any $K,K'$,
\$
J_1(K) - J_1(K') = \tr[\Sigma_{K} [E_{K'}^\top(K-K') + (K-K')^\top E_{K'} + (K-K')^\top R(K-K')]]\,.
\$
Choosing $K' = K^*$, since $E_{K^*} = 0$, we get 
\$
J_1(K) - J_1(K^*) = \tr[\Sigma_{K} (K-K^*)^\top R(K-K^*)] \geq \sigma_{\min}(R), \sigma_{\min}(DD^\top) \|K_N - K^*\|_F^2\,.
\$
Therefore, if $(K_N, b^{K_N})$ makes $J(K_N, b^{K_N}) - J(K^*, b^*) = J_1(K) - J_1(K^*)  \leq \varepsilon$, we surely obtain $\|K_N - K^*\|_F^2 \le \sigma_{\min}^{-1} (R)\sigma_{\min}^{-1} (DD^\top)\varepsilon$.

The bound for $b^{K_N}$ is straightforward as
\$
\|b^{K_N} - b^*\|_2 & \le \|K_N - K^*\|_2 \|Q^{-1}A^\top(AQ^{-1}A^\top+BR^{-1}B^\top)^{-1}a\|_2 \notag \\
& \le C_b(a) \|K_N - K^*\|_F \le C_b(a) \; \sigma_{\min}^{-1/2}(R)\sigma_{\min}^{-1/2}(DD^\top) \sqrt{\varepsilon}\,.
\$

\end{proof}

\section{Proofs for Section \ref{sec:MFG}} 
\label{app:proofs_MFG}

\begin{proposition} (Proposition \ref{prop:uniqueness_nash}). 
Under Assumption \ref{assump:nash}, the operator $\Lambda(\cdot) = \Lambda_2(\cdot,\Lambda_1(\cdot))$ is $L_0$-Lipschitz, where $L_0$ is given in Assumption \ref{assump:nash}. Moreover, there exists a unique Nash equilibrium pair $(\mu^*, \pi^*)$ of the MFG.
\end{proposition}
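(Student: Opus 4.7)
The plan is to apply the Banach fixed-point theorem to the composed operator $\Lambda(\cdot) = \Lambda_2(\cdot, \Lambda_1(\cdot))$ acting on $\RR^{d+k}$. A fixed point $\mu^*$ of $\Lambda$ is precisely a Nash mean-field state/action, with the corresponding Nash policy recovered as $\pi^* = \Lambda_1(\mu^*)$. So the whole proposition reduces to showing that $\Lambda$ is a contraction with Lipschitz constant $L_0 < 1$.

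First I would write $\Lambda$ in closed form. Assumption \ref{assump:nash}(i)(ii) makes $\Lambda_1(\mu) = (K^*, b_\mu^*)$, where $K^*$ is independent of $\mu$ and
\$
b_\mu^* = -(K^* Q^{-1}A^\top + R^{-1}B^\top)\, \Gamma^{-1}\, \tilde a_\mu, \qquad \tilde a_\mu = \bar A \mu_x + \bar B \mu_u.
\$
Substituting this into the formulas \eqref{eq:lambda2_x}--\eqref{eq:lambda2_u} for $\Lambda_2$ gives an explicit affine expression for $\Lambda(\mu)_x$ and $\Lambda(\mu)_u$ in $\mu$. Because everything is affine, the map $\mu \mapsto \Lambda(\mu)$ differs from the map $\mu \mapsto \Lambda(\mu')$ through a purely linear operator, so I only need to bound its operator norm.

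Next I would estimate each block. Using linearity of $b_\mu^*$ in $\tilde a_\mu$ and the identity $\tilde a_\mu - \tilde a_{\mu'} = \bar A(\mu_x - \mu'_x) + \bar B(\mu_u - \mu'_u)$, the constant $L_1$ is exactly chosen so that $\|b_\mu^* - b_{\mu'}^*\| \le L_1 \|\mu - \mu'\|$. Plugging this into $\Lambda_2$, the coefficient in front of $b_\mu^* - b_{\mu'}^*$ is $(A-BK^*)^{-1} B$ in the $x$-block and $I + K^*(A-BK^*)^{-1}B$ in the $u$-block, whose spectral norms sum to $L_3$. The residual direct dependence on $\tilde a_\mu - \tilde a_{\mu'}$ passes through $(A-BK^*)^{-1}$ to yield $\Delta_A, \Delta_B$ in the $x$-block and $K^*\Delta_A, K^*\Delta_B$ in the $u$-block, whose block-wise contribution is controlled by $L_2$. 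Adding the two sources of variation and collecting constants produces $\|\Lambda(\mu) - \Lambda(\mu')\| \le (L_1 L_3 + L_2)\|\mu - \mu'\| = L_0 \|\mu - \mu'\|$. Since Assumption \ref{assump:nash}(ii) forces $L_0 < 1$, Banach's theorem delivers a unique fixed point $\mu^*$, and uniqueness of $\pi^* = \Lambda_1(\mu^*)$ follows since $\Lambda_1$ is single-valued.

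The main obstacle will be the bookkeeping in the norm inequalities: the constants $L_1$ and $L_2$ are defined with $\max\{\cdot, \cdot\}$ over the $\bar A$- and $\bar B$-branches, which strongly suggests tracking the $\mu_x$ and $\mu_u$ perturbations componentwise and taking the maximum only at the end, rather than using a naive triangle inequality that would give $L_1\cdot(\|\mu_x-\mu'_x\|+\|\mu_u-\mu'_u\|)$. Getting the factor to collapse to $L_1 \|\mu - \mu'\|_2$ (and similarly for $L_2$) is the one genuinely delicate step; the rest is the linear algebra needed to identify the coefficients in front of each perturbation as matching exactly the expressions that define $L_1$, $L_2$, and $L_3$.
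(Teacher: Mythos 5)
Your proposal is correct and follows essentially the same route as the paper: decompose the variation of $\Lambda(\mu)=\Lambda_2(\mu,\Lambda_1(\mu))$ into the part flowing through $b_\mu^*$ (controlled by $L_1$ and then $L_3$) and the direct dependence of $\Lambda_2$ on $\mu$ (controlled by $L_2$), sum to get $L_0=L_1L_3+L_2$, and invoke the Banach fixed-point theorem. The only difference is presentational — you phrase $\Lambda$ as an affine map and bound its linear part, while the paper runs the same estimates as a triangle-inequality splitting of $\Lambda_2(\mu_1,\Lambda_1(\mu_1))-\Lambda_2(\mu_2,\Lambda_1(\mu_2))$ — and your remark about handling the $\mu_x$ and $\mu_u$ blocks separately before taking the $\max$ is exactly the bookkeeping the paper carries out.
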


\begin{proof}
Consider the linear policies $\pi_{K,b}(x) = -K x+b$.
Define the distance metric of the linear policy as follows
\#
d(\pi_{K_1, b_1}, \pi_{K_2,b_2}) = \|K_1 - K_2\|_2 + \|b_1 - b_2\|_2\,.
\#

Then for the mapping $\Lambda_1(\mu)$, as the optimal $K^*$ does not depend on $\mu$, we have for any $\mu_1, \mu_2 \in \RR^{d+k}$,
\#\label{eq:lipchitz_lambda1}
d(\Lambda_1(\mu_1), \Lambda_1(\mu_2)) & = \|b_{\mu_1}^* - b_{\mu_2}^*\|_2 \notag \\
& \le \Big\|(K^* Q^{-1} A^\top + R^{-1}B^\top) (AQ^{-1}A^\top + B R^{-1}B^\top)^{-1}\bar A\Big\|_2 \|\mu_{1,x} - \mu_{2,x}\|_2 \notag\\
& \;\;\;\; + \Big\|(K^* Q^{-1} A^\top + R^{-1}B^\top) (AQ^{-1}A^\top + B R^{-1}B^\top)^{-1}\bar B\Big\|_2 \|\mu_{1,u} - \mu_{2,u}\|_2  \notag \\
& \le L_1 (\|\mu_{1,x} - \mu_{2,x}\|_2 + \|\mu_{1,u} - \mu_{2,u}\|_2) = L_1 \|\mu_1 - \mu_2\|_2 \,.
\#

For the mapping $\Lambda_2(\mu, \pi)$, with the same optimal policy $\pi \in \Pi$ under some $\mu \in \RR^{d+k}$, for any $\mu_1, \mu_2 \in \RR^{d+k}$, it holds that 
\#\label{eq:lipchitz_lambda2_1}
\|\Lambda_2(\mu_1,\pi) - \Lambda_2(\mu_2,\pi)\|_2 & = \|\mu_{\text{new},x}(\mu_1) - \mu_{\text{new},x} (\mu_2)\|_2 + \|\mu_{\text{new},u}(\mu_1) - \mu_{\text{new},u} (\mu_2)\|_2 \notag \\
& \le \|(A-BK^*)^{-1}\bar A\|_2 \|\mu_{1,x} - \mu_{2,x}\|_2  \notag \\
& \;\;\;\; + \|(A-BK^*)^{-1} \bar B\|_2 \|\mu_{1,u} - \mu_{2,u}\|_2 \notag \\
& \;\;\;\; + \|K^*(A-BK^*)^{-1}\bar A\|_2 \|\mu_{1,x} - \mu_{2,x}\|_2 \notag \\
& \;\;\;\; + \|K^*(A-BK^*)^{-1} \bar B\|_2 \|\mu_{1,u} - \mu_{2,u}\|_2 \notag \\
& \le L_2 (\|\mu_{1,x} - \mu_{2,x}\|_2 + \|\mu_{1,u} - \mu_{2,u}\|_2) = L_2 \|\mu_{1} - \mu_{2}\|_2\,.
\#
With the same mean-field variable $\mu$, since any two optimal policies $\pi_1$ and $\pi_2$ share the same $K^*$, we also have the following bound
\#\label{eq:lipchitz_lambda2_2}
\|\Lambda_2(\mu,\pi_1) - \Lambda_2(\mu,\pi_2)\|_2 & \le \Big(\|(A-BK^*)^{-1} B\|_2 + \|I + K^*(A-BK^*)^{-1}B\|_2\Big)  \|b_{\pi_1} - b_{\pi_2}\|_2 \notag \\
& = L_3  \|b_{\pi_1} - b_{\pi_2}\|_2\,.
\#

Therefore, combining \eqref{eq:lipchitz_lambda1}. \eqref{eq:lipchitz_lambda2_1}, \eqref{eq:lipchitz_lambda2_2}, we obtain for any $\mu_1, \mu_2 \in \RR^{d+k}$,
\#
& \|\Lambda(\mu_1) - \Lambda(\mu_2)\|_2 = \|\Lambda_2(\mu_1, \Lambda_1(\mu_1)) - \Lambda_2(\mu_2, \Lambda_1(\mu_2)) \|_2 \notag \\
& \quad\quad \le \|\Lambda_2(\mu_1, \Lambda_1(\mu_1)) - \Lambda_2(\mu_1, \Lambda_1(\mu_2)) \|_2 + \|\Lambda_2(\mu_1, \Lambda_1(\mu_2)) - \Lambda_2(\mu_2, \Lambda_1(\mu_2)) \|_2 \notag \\
& \quad\quad \le L_3 \,\, d(\Lambda_1(\mu_1), \Lambda_1(\mu_2)) + L_2 \|\mu_1 - \mu_2\|_2 \notag \\
& \quad\quad \le (L_1 L_3 + L_2) \,\, \|\mu_1 - \mu_2\|_2 = L_0 \, \|\mu_1 - \mu_2\|_2\,.
\# 
So given the assumption that $L_0 < 1$, the operator $\Lambda(\cdot)$ is a contraction. By Banach fixed-point theorem, we conclude that $\Lambda(\cdot)$ has a unique fixed point, which gives the unique Nash equilibrium pair. This completes the proof of the proposition.

\end{proof}

\begin{theorem} (Theorem \ref{thm:conv_mfg}).
For a sufficiently small tolerance $0 < \varepsilon < 1$, we choose the number of iterations $S$ in Algorithm \ref{alg1} such that 
\#
S \geq \frac{\log(2\|\mu_0 - \mu^*\|_2 \cdot \varepsilon^{-1})}{\log(1/L_0)}\,.
\#
For any $s = 0, 1, \dots, S-1$, define 
\#
\varepsilon_s &= \min\Big\{ 2^{-2} \|B\|_2^{-2} \|(A-BK^*)^{-1}\|_2^{-2}, C_b(\mu_s)^{-2} \varepsilon^2, \\
& \quad\quad\quad\;\;\; 2^{-2s-4} (L_3C_b(\mu_s) + 2C_K(\mu_2))^{-2} \varepsilon^2, \varepsilon^2  \Big\} \cdot \sigma_{\min}(R)\sigma_{\min}(DD^\top)\,,
\#
where 
\#
C_b(\mu_s) &= \|Q^{-1}A^\top(AQ^{-1}A^\top+BR^{-1}B^\top)^{-1} \tilde a_{\mu_s}\|_2\,, \\
C_K(\mu_s) & = \Big(\|\tilde\alpha_{\mu_s}\|_2 + (1 + L_1 \|\mu_s\|_2) \|B\|_2\Big)  \notag \\
& \quad\;\; \cdot \Big(\|(A-BK^*)^{-1}\|_2 + (1+\|K^*\|_2) \|(A-BK^*)^{-1}\|_2^2 \|B\|_2 \Big)\,.
\#
In the $s$-th policy update, we choose the stepsize $\eta$ as in Theorem \ref{thm:gd1_intercept} and number of iterations 
\$
N_s \geq \frac{\| \Sigma_{K^*} \|}{\eta\sigma_{\min}^2(DD^\top) \sigma_{\min}(R)}\log\frac{J_{\mu_s,1}(K_{\pi_s})-J_{\mu_s,1}(K^*)}{\varepsilon_s},
\$
such that $J_{\mu_s}(K_{\pi_{s+1}}, b_{\pi_{s+1}}) - J_{\mu_s}(K^*, b_{\mu_s}^*) \leq \varepsilon_s$ where $K^*, b_{\mu_s}^*$ are parameters of the optimal policy $\pi_{\mu_s}^* = \Lambda_1(\mu_s)$ generated from mean-field state/action $\mu_s$,  $J_{\mu_s}(K_{\pi}, b_{\pi}) = J_{\mu_s}(\pi)$ is defined in the drifted MFG problem \eqref{eq:MFG_lqr_model}, and $J_{\mu_s,1}(K_{\pi})$ is defined in \eqref{eq:J_decomp} corresponding to $J_{\mu_s}(K_{\pi}, b_{\pi})$. Then it holds that 
\#
\|\mu_S - \mu^* \|_2 \le \varepsilon, \quad\quad \|K_{\pi_{S}} - K^*\|_F \le \varepsilon, \quad\quad \|b_{\pi_S} - b^*\|_2 \le (1+L_1) \varepsilon.
\#
Here $\mu^*$ is the Hash mean-field state/action, $K_{\pi_{S}}, b_{\pi_S}$ are parameters of the final output policy $\pi_S$, and $K^*, b^*$ are the parameteris of the Nash policy $\pi^* = \Lambda_1(\mu^*)$.
\end{theorem}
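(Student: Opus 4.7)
The plan is to combine the inner-loop convergence from Theorem \ref{thm:gd1_intercept} with the outer-loop contraction of $\Lambda(\cdot) = \Lambda_2(\cdot,\Lambda_1(\cdot))$ from Proposition \ref{prop:uniqueness_nash} via a perturbation argument, with the central goal of establishing the recursion
\$
\|\mu_{s+1} - \mu^*\|_2 \le L_0 \|\mu_s - \mu^*\|_2 + 2^{-s-2}\varepsilon,
\$
which can then be unrolled to give the target bound on $\|\mu_S - \mu^*\|_2$. First I would invoke Theorem \ref{thm:gd1_intercept} on the $s$-th drifted LQR subproblem (with intercept $\tilde a_{\mu_s} = \bar A \mu_{s,x} + \bar B \mu_{s,u}$) to conclude that the inner-loop output $(K_{\pi_{s+1}}, b_{\pi_{s+1}})$ satisfies $\|K_{\pi_{s+1}} - K^*\|_F \le \sqrt{\varepsilon_s/(\sigma_{\min}(R)\sigma_{\min}(DD^\top))}$ and $\|b_{\pi_{s+1}} - b^*_{\mu_s}\|_2 \le C_b(\mu_s)\sqrt{\varepsilon_s/(\sigma_{\min}(R)\sigma_{\min}(DD^\top))}$, where $\pi^*_{\mu_s} = (K^*, b^*_{\mu_s}) = \Lambda_1(\mu_s)$. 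Next I would split
\$
\mu_{s+1} - \mu^* = \bigl[\Lambda_2(\mu_s, \pi_{s+1}) - \Lambda_2(\mu_s, \pi^*_{\mu_s})\bigr] + \bigl[\Lambda(\mu_s) - \Lambda(\mu^*)\bigr],
\$
noting that Proposition \ref{prop:uniqueness_nash} already bounds the second bracket by $L_0 \|\mu_s - \mu^*\|_2$.

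The hard part is the ``policy-approximation'' term $\|\Lambda_2(\mu_s, \pi_{s+1}) - \Lambda_2(\mu_s, \pi^*_{\mu_s})\|_2$: since $\pi_{s+1}$ and $\pi^*_{\mu_s}$ differ in both $K$ and $b$, the clean estimate \eqref{eq:lipchitz_lambda2_2} (which assumed a shared $K^*$) no longer applies. I would use the resolvent identity $(A-BK_{\pi_{s+1}})^{-1} - (A-BK^*)^{-1} = (A-BK_{\pi_{s+1}})^{-1} B(K_{\pi_{s+1}} - K^*)(A-BK^*)^{-1}$; the first term $\tfrac14 \|B\|_2^{-2}\|(A-BK^*)^{-1}\|_2^{-2}\sigma_{\min}(R)\sigma_{\min}(DD^\top)$ in the min defining $\varepsilon_s$ is precisely designed to force $\|K_{\pi_{s+1}}-K^*\| \le (2\|B\|\|(A-BK^*)^{-1}\|)^{-1}$, so that a Neumann-series bound gives $\|(A-BK_{\pi_{s+1}})^{-1}\|_2 \le 2\|(A-BK^*)^{-1}\|_2$. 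Substituting this into the explicit formulas \eqref{eq:lambda2_x}--\eqref{eq:lambda2_u}, and using $\|b^*_{\mu_s}\|_2 \le L_1 \|\mu_s\|_2$ (from the $L_1$-Lipschitz property of $\Lambda_1$ anchored at $\mu = 0$, where $b^*_0 = 0$) to control the factor multiplying $K_{\pi_{s+1}}-K^*$, I would derive the two-term bound
\$
\|\Lambda_2(\mu_s, \pi_{s+1}) - \Lambda_2(\mu_s, \pi^*_{\mu_s})\|_2 \le 2 C_K(\mu_s)\|K_{\pi_{s+1}} - K^*\|_F + L_3 \|b_{\pi_{s+1}} - b^*_{\mu_s}\|_2.
\$
Plugging in the Theorem \ref{thm:gd1_intercept} bounds and invoking the $2^{-2s-4}(L_3C_b(\mu_s) + 2C_K(\mu_s))^{-2}\varepsilon^2$ term in $\varepsilon_s$ then collapses the right-hand side to exactly $2^{-s-2}\varepsilon$, which is where the delicate design of $\varepsilon_s$ pays off.

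Finally, unrolling the recursion gives $\|\mu_S - \mu^*\|_2 \le L_0^S \|\mu_0 - \mu^*\|_2 + \varepsilon \sum_{s=0}^{S-1} L_0^{S-1-s} 2^{-s-2} \le \varepsilon/2 + \varepsilon/2 = \varepsilon$, where the first term uses the lower bound on $S$ and the second uses $L_0 < 1$ together with a geometric sum. The $K$-bound $\|K_{\pi_S} - K^*\|_F \le \varepsilon$ is then immediate from the inner-loop estimate combined with $\varepsilon_{S-1} \le \varepsilon^2 \sigma_{\min}(R)\sigma_{\min}(DD^\top)$. For the $b$-bound I would split
\$
\|b_{\pi_S} - b^*\|_2 \le \|b_{\pi_S} - b^*_{\mu_{S-1}}\|_2 + \|b^*_{\mu_{S-1}} - b^*\|_2 \le \varepsilon + L_1 \|\mu_{S-1} - \mu^*\|_2 \le (1+L_1)\varepsilon,
\$
where the first term uses $\varepsilon_{S-1} \le C_b(\mu_{S-1})^{-2}\varepsilon^2 \sigma_{\min}(R)\sigma_{\min}(DD^\top)$ and the second combines the $L_1$-Lipschitz property of $\Lambda_1$ (applied to $\Lambda_1(\mu_{S-1}) = (K^*, b^*_{\mu_{S-1}})$ vs.\ $\Lambda_1(\mu^*) = (K^*, b^*)$) with the bound $\|\mu_{S-1}-\mu^*\|_2 \le \varepsilon$ obtained by running the same recursion one step short.
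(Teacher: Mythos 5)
Your proposal is correct and follows essentially the same route as the paper's proof: the same decomposition of $\mu_{s+1}-\mu^*$ into a policy-approximation term and a contraction term $\Lambda(\mu_s)-\Lambda(\mu^*)$, the same resolvent/Neumann argument to handle the perturbation of $(A-BK)^{-1}$, the same role for each term in the $\min$ defining $\varepsilon_s$, and the same unrolled recursion and final triangle-inequality bounds for $K_{\pi_S}$ and $b_{\pi_S}$. The only cosmetic difference is that you anchor the final $b$-bound at $b^*_{\mu_{S-1}}$ while the paper writes $b^*_{\mu_S}$; your indexing is arguably the more faithful one to Algorithm \ref{alg1}, and neither choice changes the argument.
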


\begin{proof}
Define $\mu_{s+1}^* = \Lambda(\mu_s)$ as the mean-field state/action generated by the optimal policy $\pi_{\mu_s}^* = \Lambda_1(\mu_s)$. Then by \eqref{eq:lambda2_x} and \eqref{eq:lambda2_u}, we know that $\mu_{s+1}^* = ({\mu_{s+1,x}^*}^\top, {\mu_{s+1,u}^*}^\top)^\top$, and 
\$
\mu_{s+1,x}^* & = - (A-BK^*)^{-1} (B b_{\mu_s}^* + \tilde\alpha_{\mu_s})\,, \\
\mu_{s+1,u}^* & = b_{\mu_s}^* + K^* (A-BK^*)^{-1} (B b_{\mu_s}^* + \tilde\alpha_{\mu_s})\,.
\$
Therefore, by triangle inequality,
\#
\label{eq:bound_tot}
\|\mu_{s+1} -\mu^*\|_2 \le \|\mu_{s+1} -\mu_{s+1}^*\|_2 + \|\mu_{s+1}^* -\mu^*\|_2 = E_1 + E_2 \,.
\#
Next we bound $E_1$ and $E_2$ separately.

The bound for $E_2$ is straighforward. From Proposition \ref{prop:uniqueness_nash}, we have 
\$
E_2 = \|\mu_{s+1}^* -\mu^*\|_2 = \|\Lambda(\mu_s) - \Lambda(\mu^*)\|_2 \le L_0 \|\mu_{s}^* -\mu^*\|_2\,,
\$
where $L_0 = L_1L_3 + L_2$ is defined in Assumption \ref{assump:nash}.

The bound for $E_1$ is more involved. 
\$
E_1 &= \|\mu_{s+1} -\mu_{s+1}^*\|_2 = \|\mu_{s+1,x} -\mu_{s+1,x}^*\|_2  + \|\mu_{s+1,u} -\mu_{s+1,u}^*\|_2 \\
& \le \Big(\|(A-BK^*)^{-1}B\|_2 + \|I + K^*(A-BK^*)^{-1}B\|_2 \Big) \|b_{\pi_{s+1}} - b_{\mu_s}^*\|_2 \\
& \quad\;\; + \|B b_{\pi_{s+1}} + \tilde\alpha_{\mu_s}\|_2 \Big(\|(A-BK_{\pi_{s+1}})^{-1} - (A-BK^*)^{-1}\|_2   \\
& \quad\;\; + \|K_{\pi_{s+1}}(A-BK_{\pi_{s+1}})^{-1} - K^*(A-BK^*)^{-1}\|_2 \Big)  = F_1 + F_2\,.
\$
From Theorem \ref{thm:gd1_intercept}, we have $\|b_{\pi_{s+1}} - b_{\mu_s}^*\|_2 \le C_b(\mu_s) \sigma_{\min}^{-1/2}(R)\sigma_{\min}^{-1/2}(DD^\top) \sqrt{\varepsilon_s}$, where $C_b(\mu_s) = \|Q^{-1}A^\top(AQ^{-1}A^\top+BR^{-1}B^\top)^{-1} \tilde a_{\mu_s}\|_2$. So
\#
\label{eq:bound_F1}
F_1 \le L_3 C_b(\mu_s) \sigma_{\min}^{-1/2}(R)\sigma_{\min}^{-1/2}(DD^\top) \sqrt{\varepsilon_s}\,.
\#
Recall that $L_3 = \|(A-BK^*)^{-1}B\|_2 + \|I + K^*(A-BK^*)^{-1}B\|_2 $ is defined in Assumption \ref{assump:nash}. Now let us bound $F_2$. 

Firstly,
\$
\|B b_{\pi_{s+1}} + \tilde\alpha_{\mu_s}\|_2 &\le \|B b_{\mu_s}^* + \tilde\alpha_{\mu_s}\|_2 + \|B\|_2 \|b_{\pi_{s+1}} - b_{\mu_s}^*\|_2 \\
& \le (\|\tilde\alpha_{\mu_s}\|_2 + L_1 \|B\|_2 \|\mu_s\|_2) + \|B\|_2 C_b(\mu_s) \sigma_{\min}^{-1/2}(R)\sigma_{\min}^{-1/2}(DD^\top) \sqrt{\varepsilon_s} \\
& \le \|\tilde\alpha_{\mu_s}\|_2 + (L_1 \|\mu_s\|_2 + 1) \|B\|_2\,,
\$
if we choose $\varepsilon_s$ such that $C_b(\mu_s) \sigma_{\min}^{-1/2}(R)\sigma_{\min}^{-1/2}(DD^\top) \sqrt{\varepsilon_s} \le 1$. The second inequality is due to $L_1$-Lipschitz of $\Lambda_1(\cdot)$. Secondly, 
\$
\|(A-BK_{\pi_{s+1}})^{-1} - (A-BK^*)^{-1}\|_2 \le \|(A-BK_{\pi_{s+1}})^{-1}\|_2 \|(A-BK^*)^{-1}\|_2 \|B(K_{\pi_{s+1}} - K^*)\|_2\,.
\$
Therefore, 
\$
\|(A-BK_{\pi_{s+1}})^{-1} - (A-BK^*)^{-1}\|_2 & \le \frac{\|(A-BK^*)^{-1}\|_2^2 \|B\|_2 \|K_{\pi_{s+1}} - K^*\|_2}{1 - \|(A-BK^*)^{-1}\|_2 \|B\|_2 \|K_{\pi_{s+1}} - K^*\|_2}  \\
& \le 2 \|(A-BK^*)^{-1}\|_2^2 \|B\|_2 \|K_{\pi_{s+1}} - K^*\|_2 \,,
\$
if we choose $\varepsilon_s$ such that  $\|(A-BK^*)^{-1}\|_2 \|B\|_2 \|K_{\pi_{s+1}} - K^*\|_2 \le \|(A-BK^*)^{-1}\|_2 \|B\|_2 \sigma_{\min}^{-1/2}(R)\sigma_{\min}^{-1/2}(DD^\top) \sqrt{\varepsilon_s} \le 1/2$ where we use the bound $\|K_{\pi_{s+1}} - K^*\|_2 \le \sigma_{\min}^{-1/2}(R)\sigma_{\min}^{-1/2}(DD^\top) \sqrt{\varepsilon_s}$ from Theorem \ref{thm:gd1_intercept}. Lastly,
\$
&\|K_{\pi_{s+1}}(A-BK_{\pi_{s+1}})^{-1} - K^*(A-BK^*)^{-1}\|_2 \\
& \quad\quad \le \|K_{\pi_{s+1}} - K^*\|_2 \|(A-BK_{\pi_{s+1}})^{-1}\|_2 + \|K^*\|_2 \|(A-BK_{\pi_{s+1}})^{-1} - (A-BK^*)^{-1}\|_2 \\
& \quad\quad \le \|K_{\pi_{s+1}} - K^*\|_2 \|(A-BK_{\pi_{s+1}})^{-1}\|_2 + 2 \|K^*\|_2 \|(A-BK^*)^{-1}\|_2^2 \|B\|_2 \|K_{\pi_{s+1}} - K^*\|_2 \\
& \quad\quad \le  2 \|K_{\pi_{s+1}} - K^*\|_2 \|(A-BK^*)^{-1}\|_2+ 2 \|K^*\|_2 \|(A-BK^*)^{-1}\|_2^2 \|B\|_2 \|K_{\pi_{s+1}} - K^*\|_2\,,
\$
where the last inequality assumes $\|(A-BK^*)^{-1}\|_2 \|B\|_2 \|K_{\pi_{s+1}} - K^*\|_2  \le 1/2$ again. Combing the above derivations, we reach the following bound for $F_2$
\#
\label{eq:bound_F2}
F_2 \le 2  C_K(\mu_s) \|K_{\pi_{s+1}} - K^*\|_2 \le 2C_K(\mu_s) \sigma_{\min}^{-1/2}(R)\sigma_{\min}^{-1/2}(DD^\top) \sqrt{\varepsilon_s}\,,
\#
where 
\$ 
C_K(\mu_s) = \Big(\|\tilde\alpha_{\mu_s}\|_2 + (1 + L_1 \|\mu_s\|_2) \|B\|_2\Big)\Big(\|(A-BK^*)^{-1}\|_2 + (1+\|K^*\|_2) \|(A-BK^*)^{-1}\|_2^2 \|B\|_2 \Big)\,.
\$
Combining the bounds \eqref{eq:bound_F1} and \eqref{eq:bound_F2}, we have
\$
E_1 \le (L_3 C_b(\mu_s) + 2 C_K(\mu_s) ) \sigma_{\min}^{-1/2}(R)\sigma_{\min}^{-1/2}(DD^\top) \sqrt{\varepsilon_s}\,.
\$
Finally, we hope to choose $\varepsilon_s$ such that $E_1 \le \varepsilon \cdot 2^{-s-2}$, which will be sufficient to prove the theorem. Therefore, we just need to set $\varepsilon_s$ as follows
\$
\varepsilon_s &= \min\Big\{ 2^{-2} \|B\|_2^{-2} \|(A-BK^*)^{-1}\|_2^{-2}, C_b(\mu_s)^{-2}, \\
& \quad\quad\quad\;\;\; 2^{-2s-4} (L_3C_b(\mu_s) + 2C_K(\mu_s))^{-2} \varepsilon^2  \Big\} \cdot \sigma_{\min}(R)\sigma_{\min}(DD^\top)\,.
\$

With the bounds of $E_1$ and $E_2$, we have shown from \eqref{eq:bound_tot} that 
\#
\|\mu_{s+1} -\mu^*\|_2 \le L_0 \|\mu_{s} -\mu^*\|_2  + \varepsilon \cdot 2^{-s-2} \,.
\#
Iterating over $s$ and noting that $L_0 < 1$, we have
\$
\|\mu_{S} -\mu^*\|_2 \le L_0^S \|\mu_0 - \mu^*\|_2 + \varepsilon / 2\,.
\$
Therefore, if we choose $S > \log(2\|\mu_0 - \mu^*\|_2 \cdot \varepsilon^{-1}) / \log(1/L_0)$, we have $\|\mu_{S} -\mu^*\|_2  < \varepsilon$.

Finally we show the bounds for $K_{\pi_S}$ and $b_{\pi_S}$. Since $K^*$ does not depend on $\mu_s$, for any iteration $s$ including the last iteration $S$, we directly get 
\#
\|K_{\pi_{S}} - K^*\|_F \le \sigma_{\min}^{-1/2}(R) \sigma_{\min}^{-1/2}(DD^\top) \sqrt{\varepsilon_S} \le \varepsilon \,,
\# 
from Theorem \ref{thm:gd1_intercept}. By the triangle inequality,
\#
\|b_{\pi_S} - b^*\|_2 & \le \|b_{\pi_S} - b_{\mu_S}^*\|_2 + \|b_{\mu_S}^* - b^*\|_2 \notag \\
& \le C_b(\mu_S) \sigma_{\min}^{-1/2}(R)\sigma_{\min}^{-1/2}(DD^\top) \sqrt{\varepsilon_S} + L_1 \|\mu_S - \mu^*\|_2 \notag \\
& \le (1 + L_1) \varepsilon \,,
\#
where the second inequality comes from Theorem \ref{thm:gd1_intercept} and the last inequality comes from the choice of $\varepsilon_S$. Thus we now complete the proof of the theorem.

\end{proof}

\clearpage
\bibliography{Reference}{}

\begin{thebibliography}{10}

\bibitem{anderson2007optimal}
Brian D~O Anderson and John~B Moore.
\newblock {\em Optimal control: linear quadratic methods}.
\newblock Courier Corporation, 2007.

\bibitem{araki2017multi}
Brandon Araki, John Strang, Sarah Pohorecky, Celine Qiu, Tobias Naegeli, and
  Daniela Rus.
\newblock Multi-robot path planning for a swarm of robots that can both fly and
  drive.
\newblock In {\em 2017 IEEE International Conference on Robotics and Automation
  (ICRA)}, pages 5575--5582. IEEE, 2017.

\bibitem{bauso2012robust}
Dario Bauso, Hamidou Tembine, and Tamer Ba{\c{s}}ar.
\newblock Robust mean field games with application to production of an
  exhaustible resource.
\newblock {\em IFAC Proceedings Volumes}, 45(13):454--459, 2012.

\bibitem{bensoussan2013mean}
Alain Bensoussan, Jens Frehse, Phillip Yam, et~al.
\newblock {\em Mean field games and mean field type control theory}, volume
  101.
\newblock Springer, 2013.

\bibitem{bertsekas1995dynamic}
Dimitri~P Bertsekas.
\newblock {\em Dynamic programming and optimal control}, volume~1.
\newblock Athena scientific Belmont, MA, 1995.

\bibitem{bu2019global}
Jingjing Bu, Lillian~J Ratliff, and Mehran Mesbahi.
\newblock Global convergence of policy gradient for sequential zero-sum linear
  quadratic dynamic games.
\newblock {\em arXiv preprint arXiv:1911.04672}, 2019.

\bibitem{cardaliaguet2017learning}
Pierre Cardaliaguet and Saeed Hadikhanloo.
\newblock Learning in mean field games: the fictitious play.
\newblock {\em ESAIM: Control, Optimisation and Calculus of Variations},
  23(2):569--591, 2017.

\bibitem{carmona2018probabilistic}
Ren{\'e} Carmona, Fran{\c{c}}ois Delarue, et~al.
\newblock {\em Probabilistic Theory of Mean Field Games with Applications
  I-II}.
\newblock Springer, 2018.

\bibitem{carmona2019linear}
Ren{\'e} Carmona, Mathieu Lauri{\`e}re, and Zongjun Tan.
\newblock Linear-quadratic mean-field reinforcement learning: convergence of
  policy gradient methods.
\newblock {\em arXiv preprint arXiv:1910.04295}, 2019.

\bibitem{de2006learning}
Enrique~Munoz de~Cote, Alessandro Lazaric, and Marcello Restelli.
\newblock Learning to cooperate in multi-agent social dilemmas.
\newblock In {\em Proceedings of the fifth international joint conference on
  Autonomous agents and multiagent systems}, pages 783--785, 2006.

\bibitem{dimarogonas2010stability}
Dimos~V Dimarogonas and Karl~H Johansson.
\newblock Stability analysis for multi-agent systems using the incidence
  matrix: Quantized communication and formation control.
\newblock {\em Automatica}, 46(4):695--700, 2010.

\bibitem{egerstedt2001formation}
Magnus Egerstedt and Xiaoming Hu.
\newblock Formation constrained multi-agent control.
\newblock {\em IEEE transactions on robotics and automation}, 17(6):947--951,
  2001.

\bibitem{elie2020convergence}
Romuald Elie, Julien Perolat, Mathieu Lauri{\`e}re, Matthieu Geist, and Olivier
  Pietquin.
\newblock On the convergence of model free learning in mean field games.
\newblock In {\em AAAI Conference one Artificial Intelligence (AAAI 2020)},
  2020.

\bibitem{elliott2013discrete}
Robert Elliott, Xun Li, and Yuan-Hua Ni.
\newblock Discrete time mean-field stochastic linear-quadratic optimal control
  problems.
\newblock {\em Automatica}, 49(11):3222--3233, 2013.

\bibitem{engwerda2005lq}
Jacob Engwerda.
\newblock {\em LQ dynamic optimization and differential games}.
\newblock John Wiley \& Sons, 2005.

\bibitem{fang2014lqr}
Jian Fang.
\newblock The {LQR} controller design of two-wheeled self-balancing robot based
  on the particle swarm optimization algorithm.
\newblock {\em Mathematical Problems in Engineering}, 2014, 2014.

\bibitem{fazel2018global}
Maryam Fazel, Rong Ge, Sham~M Kakade, and Mehran Mesbahi.
\newblock Global convergence of policy gradient methods for the linear
  quadratic regulator.
\newblock {\em arXiv preprint arXiv:1801.05039}, 2018.

\bibitem{fu2019actor}
Zuyue Fu, Zhuoran Yang, Yongxin Chen, and Zhaoran Wang.
\newblock Actor-critic provably finds {Nash} equilibria of linear-quadratic
  mean-field games.
\newblock {\em arXiv preprint arXiv:1910.07498}, 2019.

\bibitem{guo2019learning}
Xin Guo, Anran Hu, Renyuan Xu, and Junzi Zhang.
\newblock Learning mean-field games.
\newblock In {\em Advances in Neural Information Processing Systems}, pages
  4967--4977, 2019.

\bibitem{heinrich2016deep}
Johannes Heinrich and David Silver.
\newblock Deep reinforcement learning from self-play in imperfect-information
  games.
\newblock {\em arXiv preprint arXiv:1603.01121}, 2016.

\bibitem{huang2018linear}
Jianhui Huang and Na~Li.
\newblock Linear--quadratic mean-field game for stochastic delayed systems.
\newblock {\em IEEE Transactions on Automatic Control}, 63(8):2722--2729, 2018.

\bibitem{huang2003individual}
Minyi Huang, Peter~E Caines, and Roland~P Malham{\'e}.
\newblock Individual and mass behaviour in large population stochastic wireless
  power control problems: centralized and {Nash} equilibrium solutions.
\newblock In {\em 42nd IEEE International Conference on Decision and Control
  (IEEE Cat. No. 03CH37475)}, volume~1, pages 98--103. IEEE, 2003.

\bibitem{huang2006large}
Minyi Huang, Roland~P Malham{\'e}, Peter~E Caines, et~al.
\newblock Large population stochastic dynamic games: closed-loop mckean-vlasov
  systems and the {Nash} certainty equivalence principle.
\newblock {\em Communications in Information \& Systems}, 6(3):221--252, 2006.

\bibitem{hughes2018inequity}
Edward Hughes, Joel~Z Leibo, Matthew Phillips, Karl Tuyls, Edgar
  Due{\~n}ez-Guzman, Antonio~Garc{\'\i}a Casta{\~n}eda, Iain Dunning, Tina Zhu,
  Kevin McKee, Raphael Koster, et~al.
\newblock Inequity aversion improves cooperation in intertemporal social
  dilemmas.
\newblock In {\em Advances in neural information processing systems}, pages
  3326--3336, 2018.

\bibitem{kakade2002natural}
Sham~M Kakade.
\newblock A natural policy gradient.
\newblock In {\em Advances in neural information processing systems}, pages
  1531--1538, 2002.

\bibitem{karimi2016linear}
Hamed Karimi, Julie Nutini, and Mark Schmidt.
\newblock Linear convergence of gradient and proximal-gradient methods under
  the polyak-{\l}ojasiewicz condition.
\newblock In {\em Joint European Conference on Machine Learning and Knowledge
  Discovery in Databases}, pages 795--811. Springer, 2016.

\bibitem{lasry2006jeux_a}
Jean-Michel Lasry and Pierre-Louis Lions.
\newblock Jeux {\`a} champ moyen. i--le cas stationnaire.
\newblock {\em Comptes Rendus Math{\'e}matique}, 343(9):619--625, 2006.

\bibitem{lasry2006jeux_b}
Jean-Michel Lasry and Pierre-Louis Lions.
\newblock Jeux {\`a} champ moyen. ii--horizon fini et contr{\^o}le optimal.
\newblock {\em Comptes Rendus Math{\'e}matique}, 343(10):679--684, 2006.

\bibitem{lasry2007mean}
Jean-Michel Lasry and Pierre-Louis Lions.
\newblock Mean field games.
\newblock {\em Japanese journal of mathematics}, 2(1):229--260, 2007.

\bibitem{leibo2017multi}
Joel~Z Leibo, Vinicius Zambaldi, Marc Lanctot, Janusz Marecki, and Thore
  Graepel.
\newblock Multi-agent reinforcement learning in sequential social dilemmas.
\newblock {\em arXiv preprint arXiv:1702.03037}, 2017.

\bibitem{minciardi2011optimal}
Riccardo Minciardi and Roberto Sacile.
\newblock Optimal control in a cooperative network of smart power grids.
\newblock {\em IEEE Systems Journal}, 6(1):126--133, 2011.

\bibitem{mnih2013playing}
Volodymyr Mnih, Koray Kavukcuoglu, David Silver, Alex Graves, Ioannis
  Antonoglou, Daan Wierstra, and Martin Riedmiller.
\newblock Playing atari with deep reinforcement learning.
\newblock {\em arXiv preprint arXiv:1312.5602}, 2013.

\bibitem{mohammadi2019convergence}
Hesameddin Mohammadi, Armin Zare, Mahdi Soltanolkotabi, and Mihailo~R
  Jovanovi{\'c}.
\newblock Convergence and sample complexity of gradient methods for the
  model-free linear quadratic regulator problem.
\newblock {\em arXiv preprint arXiv:1912.11899}, 2019.

\bibitem{moravvcik2017deepstack}
Matej Morav{\v{c}}{\'\i}k, Martin Schmid, Neil Burch, Viliam Lis{\`y}, Dustin
  Morrill, Nolan Bard, Trevor Davis, Kevin Waugh, Michael Johanson, and Michael
  Bowling.
\newblock Deepstack: Expert-level artificial intelligence in heads-up no-limit
  poker.
\newblock {\em Science}, 356(6337):508--513, 2017.

\bibitem{OpenAI_dota}
OpenAI.
\newblock Openai five.
\newblock \url{https://blog.openai.com/openai-five/}, 2018.

\bibitem{parsons2002game}
Simon Parsons and Michael Wooldridge.
\newblock Game theory and decision theory in multi-agent systems.
\newblock {\em Autonomous Agents and Multi-Agent Systems}, 5(3):243--254, 2002.

\bibitem{saldi2018markov}
Naci Saldi, Tamer Basar, and Maxim Raginsky.
\newblock {Markov}-{Nash} equilibria in mean-field games with discounted cost.
\newblock {\em SIAM Journal on Control and Optimization}, 56(6):4256--4287,
  2018.

\bibitem{saldi2019approximate}
Naci Saldi, Tamer Ba{\c{s}}ar, and Maxim Raginsky.
\newblock Approximate {Nash} equilibria in partially observed stochastic games
  with mean-field interactions.
\newblock {\em Mathematics of Operations Research}, 44(3):1006--1033, 2019.

\bibitem{semsar2009multi}
Elham Semsar-Kazerooni and Khashayar Khorasani.
\newblock Multi-agent team cooperation: A game theory approach.
\newblock {\em Automatica}, 45(10):2205--2213, 2009.

\bibitem{shalev2016safe}
Shai Shalev-Shwartz, Shaked Shammah, and Amnon Shashua.
\newblock Safe, multi-agent, reinforcement learning for autonomous driving.
\newblock {\em arXiv preprint arXiv:1610.03295}, 2016.

\bibitem{shamma2008cooperative}
Jeff Shamma.
\newblock {\em Cooperative control of distributed multi-agent systems}.
\newblock John Wiley \& Sons, 2008.

\bibitem{silver2016mastering}
David Silver, Aja Huang, Chris~J Maddison, Arthur Guez, Laurent Sifre, George
  Van Den~Driessche, Julian Schrittwieser, Ioannis Antonoglou, Veda
  Panneershelvam, Marc Lanctot, et~al.
\newblock Mastering the game of go with deep neural networks and tree search.
\newblock {\em nature}, 529(7587):484, 2016.

\bibitem{silver2014deterministic}
David Silver, Guy Lever, Nicolas Heess, Thomas Degris, Daan Wierstra, and
  Martin Riedmiller.
\newblock Deterministic policy gradient algorithms.
\newblock In Eric~P. Xing and Tony Jebara, editors, {\em Proceedings of the
  31st International Conference on Machine Learning}, volume~32 of {\em
  Proceedings of Machine Learning Research}, pages 387--395, Bejing, China,
  22--24 Jun 2014. PMLR.

\bibitem{silver2017mastering}
David Silver, Julian Schrittwieser, Karen Simonyan, Ioannis Antonoglou, Aja
  Huang, Arthur Guez, Thomas Hubert, Lucas Baker, Matthew Lai, Adrian Bolton,
  et~al.
\newblock Mastering the game of go without human knowledge.
\newblock {\em Nature}, 550(7676):354--359, 2017.

\bibitem{sutton2018reinforcement}
Richard~S Sutton and Andrew~G Barto.
\newblock {\em Reinforcement learning: An introduction}.
\newblock MIT press, 2018.

\bibitem{sutton2000policy}
Richard~S Sutton, David~A McAllester, Satinder~P Singh, and Yishay Mansour.
\newblock Policy gradient methods for reinforcement learning with function
  approximation.
\newblock In {\em Advances in neural information processing systems}, pages
  1057--1063, 2000.

\bibitem{tembine2013risk}
Hamidou Tembine, Quanyan Zhu, and Tamer Ba{\c{s}}ar.
\newblock Risk-sensitive mean-field games.
\newblock {\em IEEE Transactions on Automatic Control}, 59(4):835--850, 2013.

\bibitem{vinyals2019alphastar}
Oriol Vinyals, Igor Babuschkin, Junyoung Chung, Michael Mathieu, Max Jaderberg,
  Wojciech~M Czarnecki, Andrew Dudzik, Aja Huang, Petko Georgiev, Richard
  Powell, et~al.
\newblock Alphastar: Mastering the real-time strategy game starcraft ii.
\newblock {\em DeepMind blog}, page~2, 2019.

\bibitem{willems1971least}
Jan Willems.
\newblock Least squares stationary optimal control and the algebraic riccati
  equation.
\newblock {\em IEEE Transactions on Automatic Control}, 16(6):621--634, 1971.

\bibitem{yang2004multiagent}
Erfu Yang and Dongbing Gu.
\newblock Multiagent reinforcement learning for multi-robot systems: A survey.
\newblock Technical report, tech. rep, 2004.

\bibitem{zhang2019policy}
Kaiqing Zhang, Zhuoran Yang, and Tamer Basar.
\newblock Policy optimization provably converges to {Nash} equilibria in
  zero-sum linear quadratic games.
\newblock In {\em Advances in Neural Information Processing Systems}, pages
  11598--11610, 2019.

\bibitem{zhang2005some}
Pingjian Zhang.
\newblock Some results on two-person zero-sum linear quadratic differential
  games.
\newblock {\em SIAM journal on control and optimization}, 43(6):2157--2165,
  2005.

\bibitem{zhou2000continuous}
Xun~Yu Zhou and Duan Li.
\newblock Continuous-time mean-variance portfolio selection: A stochastic lq
  framework.
\newblock {\em Applied Mathematics and Optimization}, 42(1):19--33, 2000.

\end{thebibliography}
\bibliographystyle{plain}

\end{document}